\newtheorem{theorem}{Theorem}
\newtheorem{proposition}[theorem]{Proposition}
\newtheorem{lemma}[theorem]{Lemma}
\newtheorem{corollary}[theorem]{Corollary}
\def\R{\Bbb R}
\def\RR{\Bbb R^2_+}
\def\RRR{\Bbb R^n_+}
\def\om{\omega}
\def\tom{\tilde\omega}
\def\g{\gamma}
\def\ds{\displaystyle}
\title{Boundary behavior of the quasi-hyperbolic metric}
\author{Nikolai Nikolov and Pascal J. Thomas}
\address{N. Nikolov: Institute of Mathematics and Informatics\\Bulgarian Academy
of Sciences\\Acad. G. Bonchev 8, 1113 Sofia, Bulgaria\newline
\indent Faculty of Information Sciences\\
State University of Library Studies and Information Technologies\\
Shipchenski prohod 69A, 1574 Sofia, Bulgaria}\email{nik@math.bas.bg}
\address{Pascal J. Thomas\\
Universit\'e de Toulouse\\ UPS, INSA, UT1, UTM \\
Institut de Math\'e\-matiques de Toulouse\\
F-31062 Toulouse, France} \email{pascal.thomas@math.univ-toulouse.fr}
\subjclass[2010]{51M10, 32F45, 30C65}
\keywords{quasi-hyperbolic metric, Kobayashi distance}
\thanks{This paper was started while the first-named author was visiting
the Paul Saba\-tier University, Toulouse in November 2015.}
\begin{document}

\begin{abstract}{The precise behavior of the quasi-hyperbolic metric
near a $\mathcal C^{1,1}$-smooth part of the boundary of a domain in $\R^n$ is obtained.}
\end{abstract}

\maketitle

\section{Introduction and results}

Let $D$ be a proper subdomain of  $\R^n.$ Define the {\it quasi-hyperbolic metric} of $D$ by
$$h_D(a,b)=\inf_{\g}\int_\g\frac{||du||}{d_D(u)},\quad a,b\in D$$
where $||\cdot||$ is the Euclidean norm, $d_D=\mbox{dist}(\cdot,\partial D)$ and the
infimum is taken over all rectifiable curves $\g$ in $D$ joining $a$ to $b.$
By \cite[Lemma 1]{GO}, the infimum is attained, and
any extremal curve is called {\it quasi-hyperbolic geodesic} (for short, {\it geodesic}).
It turns out that the geodesics are $\mathcal C^{1,1}$-smooth (see \cite[Corollary 4.8]{Mar}).
The quasi-hyperbolic metric arises in the theory of quasi-conformal maps.

This paper is devoted to the boundary behavior of $h_D.$ First, we point out
the following general lower bound.

\begin{proposition}\label{GHM}\cite[Lemma 2.6]{GHM} If $D$ is a proper subdomain of $\R^n,$ then
$$h_D(a,b)\ge 2\log\frac{d_D(a)+d_D(b)+||a-b||}{2\sqrt{d_D(a)d_D(b)}},\quad a,b\in D.$$
\end{proposition}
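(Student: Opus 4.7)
The plan is to reduce the inequality to an elementary one-dimensional estimate along an arbitrary competitor curve, using only the $1$-Lipschitz continuity of the distance function $d_D$.

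Fix a rectifiable curve $\gamma$ from $a$ to $b$, parametrized by arc length $s\in[0,L]$, where $L\ge\|a-b\|$. Set $\alpha=d_D(a)$, $\beta=d_D(b)$, and $\varphi(s)=d_D(\gamma(s))$. Since $d_D$ is $1$-Lipschitz on $\R^n$ and arc-length parametrization is $1$-Lipschitz, $\varphi$ is $1$-Lipschitz. Hence
$$
\varphi(s)\le\alpha+s\quad\text{and}\quad\varphi(s)\le\beta+(L-s)\qquad(s\in[0,L]),
$$
so $\varphi(s)\le m(s):=\min(\alpha+s,\,\beta+L-s)$.

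The two linear functions defining $m$ meet at $s^\ast=(L+\beta-\alpha)/2\in[0,L]$ (one checks that $|\beta-\alpha|\le L$), where both equal $(L+\alpha+\beta)/2$. Splitting the integral at $s^\ast$ I would compute
\begin{align*}
\int_0^L\frac{ds}{\varphi(s)}
&\ge\int_0^{s^\ast}\frac{ds}{\alpha+s}+\int_{s^\ast}^L\frac{ds}{\beta+L-s}\\
&=\log\frac{\alpha+s^\ast}{\alpha}+\log\frac{\beta+L-s^\ast}{\beta}
=2\log\frac{L+\alpha+\beta}{2\sqrt{\alpha\beta}}.
\end{align*}

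Finally, since $t\mapsto\log\bigl((t+\alpha+\beta)/(2\sqrt{\alpha\beta})\bigr)$ is increasing and $L\ge\|a-b\|$, the right-hand side is at least $2\log\bigl((\|a-b\|+\alpha+\beta)/(2\sqrt{\alpha\beta})\bigr)$, which is the claimed bound; taking the infimum over $\gamma$ finishes the proof. There is really no serious obstacle here: the only point to watch is the fact that $|\alpha-\beta|\le\|a-b\|\le L$, which guarantees $s^\ast\in[0,L]$ so that the split is legitimate. The estimate is sharp because $\varphi(s)=m(s)$ is achieved along a straight segment when $a$ and $b$ lie on a common inward normal from $\partial D$, which already hints at the roles of the Euclidean ball and half-space as extremal models.
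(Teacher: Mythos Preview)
The paper does not give its own proof of this proposition; it is simply quoted from \cite{GHM}. Your argument is correct and is in fact the classical one: use the $1$-Lipschitz property of $d_D$ to bound $\varphi(s)=d_D(\gamma(s))$ above by the tent function $m(s)=\min(\alpha+s,\beta+L-s)$, integrate $1/m$ explicitly by splitting at the apex $s^\ast$, and then use $L\ge\|a-b\|$. The verification that $s^\ast\in[0,L]$ via $|\alpha-\beta|\le\|a-b\|\le L$ is exactly the point that needs to be checked, and you have it.

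One cosmetic remark: your final sentence about sharpness is slightly imprecise. Equality in the proposition requires not only that $a$ and $b$ lie on a common inward normal, but that the straight segment between them is an $h_D$-geodesic along which $d_D$ is exactly affine (so that $\varphi\equiv m$ and $L=\|a-b\|$). This holds, for instance, in a half-space or in the one-dimensional case the paper mentions right after the statement, but not for arbitrary $\mathcal C^{1,1}$ domains. This does not affect the validity of the proof.
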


Observe that equality occurs if $n=1$ (then $D$ is an open interval or ray).

From now, we assume that $n\ge 2.$ Throughout the paper, we will say that
$\zeta$ is a \emph{$\mathcal C^\alpha$ smooth boundary point} of $D$ if and only if
it admits a neighborhood in which $\partial D$ is $\mathcal C^{\alpha}$-smooth.

Recall that a $\mathcal C^1$-smooth boundary point $\zeta$ of a domain $D$ in $\R^n$ is said to be
{\it Dini-smooth} if the inner unit normal vector $n$ to $\partial D$ near $\zeta$
is a Dini-continuous function. This means that there exists a neighborhood $U$ of $\zeta$ such that
$\ds\int_0^1\frac{\om(t)}{t}dt<+\infty,$ where
$$\om(t)=\om(n,\partial D\cap U,t):=\sup\{||n_x-n_y||:||x-y||<t,\ x,y\in \partial D\cap U\}$$
is the respective modulus of continuity.

If $\ds\int_0^1\om(t)\frac{\log t}{t}dt>-\infty,$ then the point $\zeta$ is called {\it log-Dini smooth.}

The following relations between different notions of smoothness are clear:
${\mathcal C^{1,\varepsilon}}\Rightarrow\mbox{log-Dini}\Rightarrow\mbox{Dini}\Rightarrow{\mathcal C^1}.$

\begin{theorem}\label{NA}\cite[Theorem 7]{NA} Let $\zeta$ be a Dini-smooth boundary point of a domain $D$ in $\R^n.$
Then for any constant $\ds c>1+\frac{\sqrt 2}{2}$ there exists a neighborhood $U$ of $\zeta$ such that
$$h_D(a,b)\le 2\log\left(1+\frac{c||a-b||}{\sqrt{d_D(a)d_D(b)}}\right),\quad a,b\in D\cap U.$$
\end{theorem}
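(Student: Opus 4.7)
The goal is, given $a,b\in D\cap U$, to produce an admissible curve $\gamma\subset D$ with $\int_\gamma\|du\|/d_D(u)$ bounded by the right-hand side. Since the desired bound has the same logarithmic shape as the exact quasi-hyperbolic distance in the half-space $H=\R^n_+$, the natural approach is to compare $D$ locally with its tangent half-space at $\zeta$ and to control the discrepancy via the Dini modulus.

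After a translation and rotation one may assume $\zeta=0$ and that the inner unit normal at $\zeta$ is $e_n$, so that in some neighborhood of $0$ the boundary $\partial D$ is the graph of a $\mathcal{C}^1$ function $\rho$ with $\rho(0)=0$, $\nabla\rho(0)=0$, and $\nabla\rho$ Dini-continuous with modulus $\om$. Since $\om(t)\to 0$ as $t\to 0$, by shrinking $U$ one makes $\|\nabla\rho\|_\infty$ and hence $\sup_U|\rho(x')|/\|x'\|$ as small as required. The first technical ingredient is the distance estimate
\[
d_D(x)=(x_n-\rho(x'))\bigl(1+o(1)\bigr)\quad\text{as }x\to 0,
\]
with quantitative error controlled by $\om$. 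It is obtained by identifying the foot of the perpendicular $y\in\partial D$ from $x$, using that $x-y$ is parallel to the inner normal at $y$, and estimating $\rho(x')-\rho(y')$ via the Dini modulus.

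The curve itself is modelled on the half-space geodesic. Setting $\tilde a=\phi(a)$ and $\tilde b=\phi(b)$ for the flattening $\phi(x',x_n)=(x',x_n-\rho(x'))$, so that $\tilde a,\tilde b\in H$, the quasi-hyperbolic geodesic between them in $H$ is an arc of a Euclidean circle perpendicular to $\partial H$ (or a vertical segment), of length $h_H(\tilde a,\tilde b)=2\operatorname{arcsinh}(\|\tilde a-\tilde b\|/(2\sqrt{\tilde a_n\tilde b_n}))\le 2\log(1+\|\tilde a-\tilde b\|/\sqrt{\tilde a_n\tilde b_n})$. One transfers this arc to a curve $\gamma\subset D$---lifting it slightly so that $d_D$ along $\gamma$ is bounded below by the corresponding height in the half-space---and adjoins short normal segments connecting $\gamma$ to $a$ and to $b$. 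The comparisons $\|\tilde a-\tilde b\|=(1+o(1))\|a-b\|$ and $\tilde a_n\tilde b_n=(1+o(1))d_D(a)d_D(b)$ then bring the half-space estimate into the desired form.

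The main obstacle is that a crude pull-back of the half-space geodesic would give only $h_D(a,b)\le(1+\eta)h_H(\tilde a,\tilde b)$ with $\eta\to 0$ as $U$ shrinks, but this multiplicative factor in front of $h_H$ cannot be absorbed into the inner constant of $2\log(1+ck)$ when $k:=\|a-b\|/\sqrt{d_D(a)d_D(b)}$ is large---precisely the regime where $a$ or $b$ is very close to $\partial D$. The remedy is the geometric lift of the arc described above: it ensures that the Dini error enters as an additive correction inside the logarithm rather than a multiplicative one outside. Optimizing the lift height against the contribution of the normal segments joining $\gamma$ to $a$ and $b$ produces the constant $1+\sqrt{2}/2+o(1)$, whence the claimed bound holds for every $c>1+\sqrt{2}/2$ once $U$ is taken sufficiently small.
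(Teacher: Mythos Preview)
This theorem is not proved in the present paper: it is quoted verbatim from \cite[Theorem~7]{NA} and used only as background for the sharper Corollary~\ref{cor}. There is therefore no proof here against which to compare your proposal.

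Regarding the proposal itself: the overall architecture---flatten locally by $\phi(x',x_n)=(x',x_n-\rho(x'))$, take the half-space geodesic as a model curve, and control the distance discrepancy through the Dini modulus---is the natural one and is consistent with how this paper handles the related Propositions~\ref{C1} and~\ref{plane}. You also correctly isolate the real difficulty: a naive pull-back gives only $h_D\le(1+\eta)h_H$, and a multiplicative factor in front of $2\log(1+k)$ cannot be pushed inside as $2\log(1+ck)$ when $k=\|a-b\|/\sqrt{d_D(a)d_D(b)}$ is large.

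What is missing is precisely the part that carries the content of the theorem. You write that ``optimizing the lift height against the contribution of the normal segments \dots\ produces the constant $1+\sqrt{2}/2+o(1)$,'' but you do not say what curve the lifted arc actually is, what quantity is being optimized, or how the number $1+\tfrac{\sqrt{2}}{2}$ falls out. Since an inequality of the shape $h_D\le 2\log(1+c\,k)$ with \emph{some} unspecified $c$ is much weaker (and would not require the delicate additive-versus-multiplicative distinction you raise), the specific constant is the whole point, and the proposal as written leaves exactly that step as an unproved assertion. To turn this into a proof you would need to exhibit the competitor curve explicitly, compute $\int_\gamma\|du\|/d_D(u)$ for it, and carry out the optimization.
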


Since $h_D$ is an inner metric, we get an upper bound of $h_D,$ similar to the lower bound from
Proposition \ref{GHM}.

\begin{corollary}\cite[Corollary 8]{NA} Let $D$ be a Dini-smooth bounded domain in $\R^n.$
Then there exists a constant $c>0$ such that
$$h_D(a,b)\le 2\log\left(1+\frac{c||a-b||}{\sqrt{d_D(a)d_D(b)}}\right),\quad a,b\in D.$$
\end{corollary}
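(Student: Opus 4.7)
The plan is to promote the local bound of Theorem~\ref{NA} to a global one via a compactness argument, exploiting that $h_D$ is a length metric so local estimates can be patched through the triangle inequality.

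First I would apply Theorem~\ref{NA} at every boundary point and, by compactness of $\partial D$, extract a finite open cover $U_1,\dots,U_N$ of $\partial D$ together with a common constant $c_0>1+\sqrt{2}/2$ such that the bound of Theorem~\ref{NA} holds in each $U_j\cap D$. A Lebesgue-number argument then produces $\rho>0$ such that whenever $x\in D$ satisfies $d_D(x)<\rho$, the ball $B(x,\rho)$ is contained in some $U_j$. The compact set $K:=\{x\in D:d_D(x)\ge\rho\}$ admits a uniform bound $h_D(x,y)\le M$ for all $x,y\in K$.

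The argument then splits into cases. If $\min(d_D(a),d_D(b))<\rho$ and $\|a-b\|<\rho$, the Lebesgue property places both points in a common $U_j\cap D$ and Theorem~\ref{NA} yields the claim with $c=c_0$. If both $a,b\in K$, a direct segment estimate handles $\|a-b\|<\rho/2$ (on $[a,b]$ we have $d_D\ge\rho/2$, whence $h_D(a,b)\le 2\|a-b\|/\rho$), while $\|a-b\|\ge\rho/2$ follows from $h_D\le M$ as soon as $c$ is chosen so that $2\log(1+c\rho/(2\,\mathrm{diam}\,D))\ge M$; in both sub-cases concavity of $\log$ delivers the target inequality. The remaining mixed case, in which at least one of $a,b$ is near $\partial D$ but the pair straddles the cover, is dealt with by the triangle inequality: select a shifted point $a^*\in K$ that lies in the same $U_j$ as $a$ with $\|a-a^*\|\le\rho$ (e.g.\ by moving along the inner normal), and similarly $b^*$. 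Then
$$h_D(a,b)\le h_D(a,a^*)+h_D(a^*,b^*)+h_D(b^*,b),$$
where Theorem~\ref{NA} bounds the outer terms by $2\log(1+c_0\sqrt{\rho/d_D(a)})$ and $2\log(1+c_0\sqrt{\rho/d_D(b)})$ respectively, and the middle term is at most $M$.

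The principal technical obstacle is absorbing this three-term sum into a single expression of the form $2\log(1+c\|a-b\|/\sqrt{d_D(a)d_D(b)})$ with a $c$ that is uniform over $D$. As $d_D(a),d_D(b)\to 0$, both the sum above and the target RHS grow like $-\log(d_D(a)d_D(b))+O(1)$, so it suffices to take $c$ large enough (depending on $c_0$, $\rho$, $M$, and $\mathrm{diam}\,D$) to swallow the additive constant $M$ and the bounded logarithmic slack coming from the two Theorem~\ref{NA} estimates. Beyond this asymptotic bookkeeping, the argument is a routine compactness exercise.
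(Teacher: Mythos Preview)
Your argument is correct. The paper itself does not prove this corollary: it simply cites \cite[Corollary 8]{NA} and remarks, immediately before the statement, that the global bound follows from Theorem~\ref{NA} ``since $h_D$ is an inner metric.'' Your compactness-and-cases scheme is a standard and valid way to carry out that passage from local to global, and the final absorption step is indeed routine once one notes that in the mixed case $\|a-b\|\ge\rho$, so the target right-hand side is bounded below by $2\log\bigl(1+c\rho/\sqrt{d_D(a)d_D(b)}\bigr)$ and both sides share the leading term $-\log\bigl(d_D(a)d_D(b)\bigr)$.

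One small point in the mixed case deserves care. You propose reaching $a^*\in K$ by moving along the inner normal, but for a domain that is merely Dini-smooth (and so need not satisfy the inner-ball/positive-reach property available under $\mathcal C^{1,1}$ regularity) the point $a'+t\,n_{a'}$ need not have $d_D$ equal to $t$. This is easily repaired: $\mathcal C^1$-smoothness together with compactness of $\partial D$ gives $d_D(\zeta+t\,n_\zeta)/t\to 1$ uniformly in $\zeta\in\partial D$ as $t\to 0$, so after shrinking $\rho$ the choice $a^*=a'+2\rho\,n_{a'}$ lands in $K$ with $\|a-a^*\|<2\rho$, which is enough for the rest of your bookkeeping after a harmless adjustment of constants.
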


Set now $\ds s_D(a,b)=2\sinh^{-1}\frac{||a-b||}{2\sqrt{d_D(a)d_D(b)}}$
$$=2\log\frac{||a-b||+\sqrt{||a-b||^2+4d_D(a)d(b)}}{2\sqrt{d_D(a)d_D(b)}},\quad a,b\in D.$$

Note that $h_D=s_D$ if $D$ is a half-space in $\R^n$ (cf. \cite[(2.8)]{Vuo}).

The following sharp result holds in the $\mathcal C^1$-smooth case.

\begin{proposition}\label{C1}\cite[Proposition 6\hskip1pt(a)]{NA} If $\zeta$ is a $\mathcal C^1$-smooth
boundary point of a domain $D$ in $\R^n,$ then
$$\lim_{\substack{a,b\to\zeta\\a\neq b}}\frac{h_D(a,b)}{s_D(a,b)}=1.$$
\end{proposition}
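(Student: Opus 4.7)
The plan is to compare $h_D$ locally with the quasi-hyperbolic metric of the tangent half-space $H$ at $\zeta$, for which $h_H = s_H$. Fix $\varepsilon > 0$. Choose coordinates so that $\zeta = 0$ and the inner unit normal at $\zeta$ is $e_n$; by $\mathcal C^1$-smoothness, in some neighborhood $\partial D$ is the graph of a $\mathcal C^1$ function $\phi(x')$ with $\phi(0) = 0$, $\nabla\phi(0) = 0$, and $\sup|\nabla\phi| < \varepsilon$. Define the local diffeomorphism $\Phi(x', x_n) := (x', x_n + \phi(x'))$, which carries $H = \{x_n > 0\}$ into $D$. A direct calculation gives $||D\Phi(v)|| = (1 + O(\varepsilon))||v||$, and the $\mathcal C^1$-smoothness of $\partial D$ also yields $d_D(\Phi(u^*))/u^*_n \to 1$ as $u^* \to 0$. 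Setting $a^* := \Phi^{-1}(a)$ and $b^* := \Phi^{-1}(b)$, one checks that $||a^*-b^*||/||a-b|| \to 1$ and $a^*_n/d_D(a) \to 1$, and hence, using that $f(rx)/f(x)$ lies between $\min(r,1)$ and $\max(r,1)$ for the concave function $f = \sinh^{-1}$, that $s_H(a^*, b^*)/s_D(a, b) \to 1$.

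\emph{Upper bound.} Let $\g^*$ be the $h_H$-geodesic from $a^*$ to $b^*$, a circular arc orthogonal to $\partial H$; it stays near $0$ provided $a,b$ are near $\zeta$. Set $\g := \Phi \circ \g^*$. The Jacobian and distance estimates yield
\[
\int_\g \frac{||du||}{d_D(u)} \leq (1 + O(\varepsilon))\, h_H(a^*, b^*) = (1 + o(1))\, s_D(a, b).
\]
Taking $\varepsilon \to 0$ gives $\limsup_{a,b\to\zeta} h_D(a,b)/s_D(a,b) \leq 1$.

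\emph{Lower bound.} Fix $R_0 > 0$ small and take any curve $\g$ in $D$ joining $a$ and $b$. If $\g \subset B(\zeta, R_0)$, pulling back by $\Phi^{-1}$ and applying the same estimates in reverse yields $\int_\g ||du||/d_D(u) \geq (1 - O(\varepsilon))\, h_H(a^*, b^*) \geq (1 + o(1))\, s_D(a, b)$. Otherwise $\g$ meets some $u_0$ with $||u_0 - \zeta|| = R_0$; splitting $\g$ at $u_0$ and applying Proposition \ref{GHM} to each half gives
\[
\int_\g \frac{||du||}{d_D(u)} \geq h_D(a, u_0) + h_D(u_0, b) \geq \log\frac{c\,R_0^2}{d_D(a)\, d_D(b)}
\]
for some constant $c > 0$. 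Since $||a - b|| \to 0$ while $R_0$ is fixed, this lower bound exceeds $s_D(a, b) \sim \log(||a-b||^2/(d_D(a) d_D(b)))$ in the regime $s_D \to \infty$, and dominates $s_D(a, b)$ outright when $s_D$ stays bounded. In either case $\int_\g \geq (1 - o(1))\, s_D(a, b)$, and therefore $\liminf h_D(a, b)/s_D(a, b) \geq 1$.

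The delicate step is the lower bound, since it must handle every curve, not just an optimally chosen one. The case analysis, paired with Proposition \ref{GHM}, ensures that curves venturing far from $\zeta$ cannot compete with the local ones. The remainder is careful tracking of the multiplicative error factors $(1 + O(\varepsilon))$ and $(1 + o(1))$ through the diffeomorphism $\Phi$ and through $\sinh^{-1}$.
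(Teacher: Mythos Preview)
Your argument is correct and follows the same core strategy as the paper: flatten $\partial D$ near $\zeta$ by the graph map $\Phi(x',x_n)=(x',x_n+\phi(x'))$ (the paper uses its inverse $\theta(x)=(x_1+f(x'),x')$), exploit that on a neighborhood where $\sup|\nabla\phi|<\varepsilon$ this map is $(1+O(\varepsilon))$-bilipschitz and satisfies $d_D(\Phi(u^*))/u^*_n=1+O(\varepsilon)$, and combine this with $h_H=s_H$ and the concavity of $\sinh^{-1}$ to squeeze $h_D/s_D$ between $1\pm O(\varepsilon)$.

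The one genuine methodological difference is in the localization step for the lower bound. The paper invokes the fact that $D$ is a uniform domain near $\zeta$ and cites \cite[Corollary~2]{GO} to conclude that quasi-hyperbolic geodesics between points near $\zeta$ stay in a prescribed neighborhood; this immediately gives $h_D=h_{D\cap U}$ there, so both inequalities follow symmetrically from the bilipschitz estimate alone. You instead treat arbitrary competing curves by a dichotomy: if $\gamma\subset B(\zeta,R_0)$ you pull back through $\Phi^{-1}$, and if $\gamma$ escapes you split it at a point $u_0$ on $\partial B(\zeta,R_0)$ and apply Proposition~\ref{GHM} to each piece, obtaining a lower bound $\log\bigl(cR_0^2/(d_D(a)d_D(b))\bigr)$ that dominates $s_D(a,b)$ once $a,b$ are close enough to $\zeta$. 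Your route avoids the external reference to uniform-domain theory at the cost of this extra case analysis; the paper's route is shorter but relies on \cite{GO}. One small point of phrasing: you state $d_D(\Phi(u^*))/u^*_n\to 1$ as $u^*\to 0$, but what the integration actually uses (and what the slope bound $|\nabla\phi|<\varepsilon$ gives) is the uniform estimate $d_D(\Phi(u^*))/u^*_n\in[1/\sqrt{1+\varepsilon^2},\,1]$ on the whole neighborhood --- worth stating explicitly.
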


Since the proof of this proposition is not long, we shall include it for completeness.

\begin{corollary}\cite[Proposition 6\hskip1pt(b) and p.\hskip2pt3]{NA} If $D$ is a $\mathcal C^1$-smooth
bounded domain in $\R^n,$ then
$$q_D(a,b)=\left\{\begin{array}{ll}\ds\frac{h_D(a,b)}{s_D(a,b)},&a,b\in D,\ a\neq b\\
1,&\mbox{otherwise}\end{array}\right.$$
is a continuous function on $\R^n\times\R^n.$
\end{corollary}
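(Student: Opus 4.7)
The plan is to check continuity of $q_D$ at each $(a_0,b_0)\in\R^n\times\R^n$ by cases. If $a_0,b_0\in D$ with $a_0\neq b_0$, then $h_D$ and $s_D$ are continuous at $(a_0,b_0)$ with $s_D>0$, so $q_D$ is continuous. If $a_0=b_0\in D$, a direct local expansion gives $h_D(a_n,b_n),s_D(a_n,b_n)\sim||a_n-b_n||/d_D(a_0)$, whence the ratio tends to $1$. If $a_0\notin\overline D$ or $b_0\notin\overline D$, then $q_D\equiv 1$ on a neighborhood of $(a_0,b_0)$ by definition. These three cases reduce everything to the boundary case $a_0\in\partial D$, $b_0\in\overline D$ (its mirror image being symmetric).

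Fix $(a_n,b_n)\to(a_0,b_0)$ with $a_n,b_n\in D$ and $a_n\neq b_n$. The lower bound $\liminf h_D(a_n,b_n)/s_D(a_n,b_n)\ge 1$ follows from Proposition~\ref{GHM} combined with the expansion $s_D(a_n,b_n)=2\log\bigl(||a_n-b_n||/\sqrt{d_D(a_n)d_D(b_n)}\bigr)+O(1)$ (valid because its argument tends to $+\infty$), since the two expressions differ by a bounded constant while both diverge to $+\infty$. When $a_0=b_0\in\partial D$, the matching upper bound is Proposition~\ref{C1} itself, so assume $a_0\neq b_0$. Let $p_n\in\partial D$ be closest to $a_n$ and $\nu_n$ the inner unit normal at $p_n$; set $a'_n:=p_n+\epsilon_n\nu_n$ with $\epsilon_n:=1/\log(1/d_D(a_n))$. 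Define $b'_n$ analogously when $b_0\in\partial D$, and otherwise $b'_n:=b_n$. By $\mathcal C^1$-smoothness of $\partial D$ near $a_0$, $d_D(a'_n)=\epsilon_n$ for $n$ large and $a'_n\to a_0$, and the same holds for $b'_n$ when $b_0\in\partial D$.

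Apply the triangle inequality $h_D(a_n,b_n)\le h_D(a_n,a'_n)+h_D(a'_n,b'_n)+h_D(b'_n,b_n)$. Since $a_n,a'_n\to a_0$, Proposition~\ref{C1} replaces $h_D(a_n,a'_n)$ by $s_D(a_n,a'_n)(1+o(1))$, and an elementary computation gives $s_D(a_n,a'_n)\sim\log(1/d_D(a_n))$; similarly for the $b$-term when $b_0\in\partial D$, while $h_D(b'_n,b_n)=0$ when $b_0\in D$. The main obstacle is the middle term $h_D(a'_n,b'_n)$. The choice of $\epsilon_n$ is calibrated so that $a'_n,b'_n$ still approach $\partial D$ (allowing Proposition~\ref{C1} on the outer legs) but much more slowly than $a_n,b_n$; a crude curve joining $a'_n$ to $b'_n$ via a fixed deep interior point of $D$, travelling initially along the relevant inner normals, shows $h_D(a'_n,b'_n)=O(\log(1/\epsilon_n))=O(\log\log(1/d_D(a_n)))=o(\log(1/d_D(a_n)))$. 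Together with the expansion $s_D(a_n,b_n)=\log(1/d_D(a_n))+\log(1/d_D(b_n))+O(1)$ (the $b$-term being $O(1)$ when $b_0\in D$), the sum of the three upper estimates is $s_D(a_n,b_n)(1+o(1))$, giving $\limsup h_D(a_n,b_n)/s_D(a_n,b_n)\le 1$ and completing the proof.
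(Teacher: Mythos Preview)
The paper does not give its own proof of this Corollary; it is simply quoted from \cite{NA}. Your argument is essentially correct and supplies what the paper omits. The case analysis is complete, and the only non-trivial case---$a_0\in\partial D$, $b_0\in\overline D$, $a_0\neq b_0$---is handled by a standard ``three-segment'' trick: push $a_n$ (and $b_n$ if necessary) inward along the normal to an intermediate scale $\epsilon_n$ with $d_D(a_n)\ll\epsilon_n\ll 1$, apply Proposition~\ref{C1} on the short normal legs, and bound the middle leg by a crude path through a fixed interior point. The choice $\epsilon_n=1/\log(1/d_D(a_n))$ makes the middle leg $O(\log\log(1/d_D(a_n)))=o(s_D(a_n,b_n))$, which is exactly what is needed.

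Two minor points of precision. First, for a merely $\mathcal C^1$ boundary the reach may be zero, so one cannot assert $d_D(a'_n)=\epsilon_n$; what is true (and sufficient) is $d_D(a'_n)=\epsilon_n(1+o(1))$, since locally $\partial D$ is $o(|x'|)$-close to its tangent plane. Second, when $b_0\in\partial D$ the middle term is really $O(\log\log(1/d_D(a_n)))+O(\log\log(1/d_D(b_n)))$ rather than a single $O(\log(1/\epsilon_n))$; both contributions are $o(s_D(a_n,b_n))$ since $s_D(a_n,b_n)=\log(1/d_D(a_n))+\log(1/d_D(b_n))+O(1)$. Neither point affects the validity of the proof.
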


The main goal of this paper is to prove the following result related to Proposition \ref{C1}.

\begin{theorem}\label{main} If $\zeta$ is a $\mathcal C^{1,1}$-smooth boundary point of a domain $D$
in $\Bbb R^n$, then
$$\lim_{a,b\to\zeta}(h_D(a,b)-s_D (a,b))=0.$$
\end{theorem}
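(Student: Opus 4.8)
The plan is to establish matching upper and lower bounds for $h_D(a,b) - s_D(a,b)$ that both tend to $0$. The lower bound already comes from Proposition \ref{GHM}: since the quantity inside the logarithm there, $\frac{d_D(a)+d_D(b)+\|a-b\|}{2\sqrt{d_D(a)d_D(b)}}$, dominates $\frac{\|a-b\|+\sqrt{\|a-b\|^2+4d_D(a)d_D(b)}}{2\sqrt{d_D(a)d_D(b)}}$ (because $(d_D(a)+d_D(b))^2 \ge 4d_D(a)d_D(b)$ forces $d_D(a)+d_D(b) \ge \sqrt{\|a-b\|^2 + 4d_D(a)d_D(b)} - \|a-b\|$ after squaring... actually one checks $d_D(a)+d_D(b)+\|a-b\| \ge \sqrt{\|a-b\|^2+4d_D(a)d_D(b)}$ directly), we get $h_D(a,b) \ge s_D(a,b)$ for \emph{every} proper subdomain, with no smoothness needed. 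So only the upper bound $h_D(a,b) \le s_D(a,b) + o(1)$ requires work, and for that I intend to exploit the $\mathcal C^{1,1}$ hypothesis quantitatively, improving on Theorem \ref{NA} (which used only Dini smoothness and lost a multiplicative constant $c > 1 + \frac{\sqrt2}{2}$).

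The key geometric input is that near a $\mathcal C^{1,1}$ boundary point $\zeta$, the domain $D$ contains and is contained in balls of a fixed radius $\rho$ tangent to $\partial D$ at each nearby boundary point (interior and exterior ball condition). After a rotation and translation putting $\zeta$ at the origin with inner normal $e_n$, for points $a,b$ close to $\zeta$ I would compare $h_D$ with the quasi-hyperbolic metric of a model domain — either the half-space $\{x_n > 0\}$, for which $h = s$ exactly by the remark after the definition of $s_D$, or better the ball $B(\rho e_n, \rho) \subset D$. Monotonicity of $h$ under inclusion gives $h_D \le h_{B}$, and on the ball one has an explicit formula. The technical heart is then a two-sided comparison of $d_D(u)$ with $d_B(u)$ (or with the affine model $u_n$) along a near-geodesic: the $\mathcal C^{1,1}$ condition yields $d_D(u) = d_B(u)(1 + O(\|u-\zeta\|))$ uniformly, and since geodesics connecting points near $\zeta$ stay within $O(\sqrt{d_D(a)d_D(b)} + \|a-b\|)$ of $\zeta$, integrating $\frac{\|du\|}{d_D(u)}$ versus $\frac{\|du\|}{d_B(u)}$ along the extremal curve for $h_B$ costs only a factor $1 + o(1)$ — and crucially this $o(1)$ must be shown to enter \emph{additively} after taking logarithms, which is where the $\mathcal C^{1,1}$ (Lipschitz normal) rate, as opposed to a mere Dini rate, is indispensable: $\int \frac{\text{error}}{d_D(u)}\|du\|$ must be controlled, and a Lipschitz bound on the error combined with the length estimate of the geodesic gives a bound that vanishes.

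The main obstacle I anticipate is controlling the geodesic's behavior when $a$ and $b$ are at very different scales from $\zeta$, or when $\|a-b\|$ is large compared to the distances to the boundary — precisely the regime where $s_D(a,b)$ itself is large and one cannot afford a multiplicative error. Here I would split the geodesic into pieces and, on each piece, either use the localized ball comparison (when the piece is near $\zeta$) or fall back on an a priori estimate; a clean way is to reduce, via the explicit ball formula, to proving that $h_B(a,b) - s_B(a,b) \to 0$ as $a,b \to \zeta$ on the \emph{model} ball $B$ and then separately that $h_D(a,b) - h_B(a,b) \to 0$, the latter via the $d_D/d_B$ comparison. I would also include, for completeness, the short proof of Proposition \ref{C1} as promised in the text, since its argument (using $s_D$ is locally comparable to the half-space distance up to $1+o(1)$ factors) is a warm-up for the sharper additive estimate; the genuine new difficulty is upgrading every $1+o(1)$ multiplicative factor sitting \emph{inside} a logarithm into an $o(1)$ additive term \emph{outside}, which works exactly because $\log(1 + o(1)\cdot(\text{anything bounded by } e^{s_D})) $ — no, rather because the relative error in the integrand is $o(1)$ \emph{uniformly}, so $h_D \le (1+o(1)) h_B \le (1+o(1))(s_D + \text{const})$, and a final elementary estimate shows $s_D \to \infty$ is impossible to reconcile with a multiplicative slack unless one works harder — so in fact the correct route is the additive comparison of integrands from the start, $\left|\frac{1}{d_D(u)} - \frac{1}{d_B(u)}\right| \le \frac{C}{d_B(u)}\cdot\frac{\|u-\zeta\|}{1}$, integrated against $\|du\|$ along the $h_B$-geodesic whose total $\frac{\|du\|}{d_B}$-length is $h_B(a,b)$ and which lies within distance $O(\text{diam})$ of $\zeta$, giving an additive error $O(\text{diam}) \cdot h_B(a,b)$ — still multiplicative in $h_B$.

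Let me restate the real mechanism cleanly: write $r = \max(d_D(a), d_D(b), \|a-b\|)$, so $r \to 0$; then the $h_B$-geodesic stays in $B(\zeta, Cr)$, there $d_D(u)/d_B(u) = 1 + O(r)$, hence $h_D(a,b) \le (1+O(r)) h_B(a,b)$. Now if $h_B(a,b)$ stays bounded the product is $h_B(a,b) + O(r)$; if $h_B(a,b) \to \infty$ one uses instead that $d_D(u)/d_B(u) = 1 + O(d_B(u)) + O(\|u'-\zeta'\|^2/\rho)$ where the error is \emph{small precisely where $d_B(u)$ is small}, so the perturbation to $\int \frac{\|du\|}{d_B}$ is bounded \emph{independently} of how large $h_B$ is — that is the point, and it is exactly the $\mathcal C^{1,1}$ estimate $|d_D - d_B| \le C(\|u-\zeta\|^2 + d_B(u)\|u-\zeta\|)$ that delivers it. I expect verifying this uniform-in-scale bound, and the companion fact $h_B(a,b) - s_B(a,b) \to 0$ for the round ball (a computation with the known closed form of $h_B$), to be the two places where real care is needed.
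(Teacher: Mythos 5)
Your proposal contains a fatal error right at the start: the claim that Proposition \ref{GHM} gives $h_D\ge s_D$ for every proper subdomain is false, and so the lower-bound half of the theorem is not free. For the GHM quantity to dominate the one defining $s_D$ you would need
$$d_D(a)+d_D(b)\ \ge\ \sqrt{\|a-b\|^2+4d_D(a)d_D(b)},$$
which after squaring reads $(d_D(a)-d_D(b))^2\ge\|a-b\|^2$ — the \emph{reverse} of the Lipschitz inequality $|d_D(a)-d_D(b)|\le\|a-b\|$. (The inequality you do verify, $d_D(a)+d_D(b)+\|a-b\|\ge\sqrt{\|a-b\|^2+4d_D(a)d_D(b)}$, is true but irrelevant; it does not compare the two arguments of the logarithms.) A concrete counterexample is the unit disc with $a=(1/2,0)$, $b=(-1/2,0)$: here $h_D(a,b)=2\log 2$, which equals the GHM lower bound, while $s_D(a,b)=2\sinh^{-1}(1)=2\log(1+\sqrt2)>2\log2$, so $h_D<s_D$. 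This is exactly why the paper proves $\ds\liminf_{a,b\to 0}(h_D-s_D)\ge 0$ as a genuine statement (Proposition \ref{liminf}) using the $\mathcal C^{1,1}$ hypothesis: it flattens the boundary with the map $\varphi(\bar x)=(f(x),x)+x_0 n_x$, shows $\|D\varphi(\bar x)\cdot v\|\ge(1-Cx_0)\|v\|$ with $x_0=d_D(\varphi(\bar x))$ (Lemma \ref{norm}), and pulls back the $D$-geodesic to $\R^{n+1}_+$ so that the additive error is $O(\int\|du\|)=O(\|a-b\|)\to0$. You must supply an argument of this type; without it the lower bound is missing.

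For the upper bound your instinct is sound — the mechanism really is that the error in the integrand is $O(d_D(u))$, so that after dividing by $d_D(u)$ it integrates to $O(\mathrm{length})\to0$ rather than being multiplicative in $h$. But the specific route you sketch via a tangent inscribed ball $B\subset D$ has a directional problem: monotonicity gives $h_D\le h_B$, yet since $d_B\le d_D$ one has $s_B\ge s_D$, so $h_B\le s_B+o(1)$ does not yield $h_D\le s_D+o(1)$ without a separate two-sided comparison of $s_B$ and $s_D$, and that comparison is not uniform for points approaching $\zeta$ tangentially (where $d_D/d_B$ need not tend to $1$). The paper sidesteps this by building, for each pair $a,b$, a two-dimensional surface through $a$ and $b$ parametrized from $\R^2_+$ by $\varphi(x_0,x_1)=\sigma(x_1)+x_0n_{\sigma(x_1)}$ (with $\sigma$ an arc-length curve in $\partial D$ from $a'$ to $b'$), shows $\|D\varphi\cdot v\|\le(1+Cx_0)\|v\|$ (Lemma \ref{norm2}), and pushes the half-plane geodesic forward; this produces $h_D(a,b)\le s_{\R^2_+}(\alpha,\beta)+C\pi\|\alpha-\beta\|$, and the passage from $s_{\R^2_+}(\alpha,\beta)$ to $s_D(a,b)$ is the asymptotic relation \eqref{asymp}. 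If you want to repair your proposal, you should replace the inscribed-ball comparison with a flattening map whose differential you can control up to a factor $1+O(d_D)$ in both directions, and you must prove — not assume — the lower bound.
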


Note that Theorem \ref{main} and Proposition \ref{C1} say the same only if $s_D$ and
$1/s_D$ are bounded.

The assumption about regularity in Theorem \ref{main} can be weakened in the plane.

\begin{proposition}\label{plane} If $\zeta$ is a log-Dini
smooth boundary point of a domain $D$ in $\Bbb R^2,$ then
$$\lim_{a,b \to \zeta}\left(h_D(a,b)-s_D (a,b)\right)=0.$$
\end{proposition}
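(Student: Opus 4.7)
The plan is to reduce to the half-plane via a conformal map, which is available only in dimension two. Identifying $\R^2$ with $\mathbb{C}$, choose a biholomorphism $\phi$ from $D\cap U$ onto a region $\Omega\subset H:=\{w\in\mathbb{C}:\operatorname{Im}w>0\}$, sending $\zeta$ to $0$ and the boundary arc $\partial D\cap U$ onto a real interval around $0$. Under the log-Dini smoothness of $\partial D$ at $\zeta$, a classical theorem of Warschawski (together with its quantitative refinements) asserts that $\phi'$ extends continuously and non-vanishingly to $\zeta$, with modulus of continuity controlled by $\om$.

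The key analytic step is the boundary-distance comparison
$$\frac{|\phi'(z)|\,d_D(z)}{\operatorname{Im}\phi(z)}=1+\eta(z),\qquad z\in D\cap U,$$
with $\eta(z)\to 0$ as $z\to\zeta$, together with a \emph{quantitative} rate ensuring that $\eta$ is integrable against $dt/t$ along the circular arcs in $H$ that are $h_H$-geodesics issuing near $0$. This is exactly where the log-Dini condition (strictly stronger than plain Dini) is used: it supplies the extra logarithmic factor needed to dominate the $1/t$ blow-up of the $h_H$-integrand near $\mathbb{R}$.

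Given the comparison, the proof proceeds in three short steps. (a) Change variables by $\phi$ in the quasi-hyperbolic integral along curves and use $h_H=s_H$ together with the bound on $\eta$ to get both $h_D(a,b)\le s_H(\phi(a),\phi(b))+o(1)$ (by taking the $H$-geodesic from $\phi(a)$ to $\phi(b)$ as a competitor) and $h_D(a,b)\ge s_H(\phi(a),\phi(b))-o(1)$ (by pulling back an $h_D$-geodesic). (b) Compare $s_H(\phi(a),\phi(b))$ with $s_D(a,b)$: since $\phi'(\zeta)\ne 0$ exists, the ratios $|\phi(a)-\phi(b)|/||a-b||$, $\operatorname{Im}\phi(a)/d_D(a)$ and $\operatorname{Im}\phi(b)/d_D(b)$ all tend to $|\phi'(\zeta)|$, so the argument of $\sinh^{-1}$ in $s_H(\phi(a),\phi(b))$ equals $(1+o(1))$ times that in $s_D(a,b)$; combined with the elementary bound $|\sinh^{-1}(x(1+\delta))-\sinh^{-1}(x)|\le|\delta|$, this yields $s_H(\phi(a),\phi(b))-s_D(a,b)\to 0$. (c) Concatenate the two comparisons.

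The main obstacle is the quantitative distance estimate on $\eta$. Purely qualitative convergence $\eta\to 0$, which already follows from Dini smoothness, is insufficient because the $h_H$-integrand behaves like $1/t$ at the boundary; the additional logarithmic weight supplied by the log-Dini assumption is genuinely needed in order to extract an $o(1)$ additive (rather than merely multiplicative) error in the geodesic integral. This is precisely why, in the plane, we can relax the $\mathcal C^{1,1}$-smoothness of Theorem \ref{main} to log-Dini smoothness, whereas in higher dimensions no such conformal reduction is available.
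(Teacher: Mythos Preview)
Your approach coincides with the paper's: conformal reduction to the half-plane via Kellogg--Warschawski, a distance comparison of the form $d_D(z)\,|\phi'(z)|/d_H(\phi(z))=1+\eta(z)$ with $|\eta|$ controlled by the modified modulus $\om^\ast$, a two-sided geodesic-integral estimate yielding $h_D=h_H\circ\phi+o(1)$, and a final comparison $s_H\circ\phi-s_D\to 0$ using that $\sinh^{-1}(qt)-\sinh^{-1}t<\log q$ for $q>1$ (your inequality $|\sinh^{-1}(x(1+\delta))-\sinh^{-1}x|\le|\delta|$ is a variant of the same fact).

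One ingredient is missing from your sketch. For the bound $h_D(a,b)\ge h_H(\phi(a),\phi(b))-o(1)$ you must control the error integral $\int_\gamma |\eta|\,d_D^{-1}\,|dv|$ along the $h_D$-geodesic $\gamma$, not along an explicit circular arc; yet your discussion of integrability of $\eta$ against $dt/t$ is phrased only for the $H$-geodesics. Along the a priori unknown curve $\gamma$ you need a lower bound on $d_D\circ\gamma$. The paper supplies this via the uniform-domain property of $D$ near $\zeta$ (Gehring--Osgood): there is $c>0$ with $l(\gamma)\le c^{-1}\|a-b\|$ and $d_D(\gamma(t))\ge c\max(t,l-t)$ in the arclength parameter, whence the error is dominated by $\frac{2}{c}\int_0^{cl/2}\frac{\om^\ast(t)}{t}\,dt$, and the log-Dini condition is precisely what makes this tend to $0$ as $l\to 0$. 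Without this geometric input your $\liminf$ half is incomplete.
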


The above results imply the following optimal version of Theorem \ref{NA}.

\begin{corollary}\label{cor} Let $\zeta$ be a $\mathcal C^{1,1}$-smooth boundary point of a domain
$D$ in $\Bbb R^n$ or $\zeta$ be a log-Dini smooth boundary point of a domain $D$ in $\Bbb R^2.$
Then for any constant $c>1$ there exists a neighborhood $U$ of $\zeta$ such that
$$h_D(a,b)\le 2\log\left(1+\frac{c||a-b||}{\sqrt{d_D(a)d_D(b)}}\right),\quad a,b\in D\cap U.$$
\end{corollary}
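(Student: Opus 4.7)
Fix $c > 1$ and write $X := \|a-b\|/\sqrt{d_D(a)\,d_D(b)}$, so that $s_D(a,b) = 2\log \phi(X)$ where $\phi(X) := X/2 + \sqrt{X^2/4 + 1}$. The elementary inequality $(1+X/2)^2 \ge X^2/4 + 1$ gives $\phi(X) \le 1+X$, hence $s_D(a,b) \le 2\log(1+X)$. Theorem~\ref{main} (or Proposition~\ref{plane} in the plane) supplies $h_D - s_D \to 0$ as $a,b \to \zeta$, so a naive attempt is to combine $h_D \le s_D + \eta$ with $s_D \le 2\log(1+X)$ and compare to the target $2\log(1+cX)$. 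This succeeds when $X$ is bounded away from $0$ but collapses as $X \to 0$, since the target itself vanishes there. The plan is therefore to split into two regimes based on a threshold $X_0 \in (0, 1/\sqrt 2)$ that will depend only on $c$.

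\emph{Small-$X$ regime} ($X \le X_0$). Assume WLOG $d_D(a) \le d_D(b)$ and set $y := \sqrt{d_D(b)/d_D(a)} \ge 1$. The Lipschitz property $d_D(b) \le d_D(a) + \|a-b\|$ yields $y^2 \le 1 + Xy$, forcing $y \le \phi(X)$ and hence $\|a-b\|/d_D(a) = Xy \le X\phi(X)$. Since $X\phi(X) < 1$ whenever $X < 1/\sqrt 2$, the segment from $a$ to $b$ lies in $D$, and direct integration of $\|du\|/d_D(u)$ along it gives
$$h_D(a,b) \le -\log\bigl(1 - \|a-b\|/d_D(a)\bigr) \le -\log\bigl(1 - X\phi(X)\bigr).$$
The map $X \mapsto (1+cX)^2(1-X\phi(X))$ equals $1$ at $X=0$ with derivative $2c-1 > 0$, so it remains $\ge 1$ on some interval $[0, X_1]$ with $X_1 > 0$ depending only on $c$; this is equivalent to $-\log(1-X\phi(X)) \le 2\log(1+cX)$ on $[0, X_1]$. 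Fix $X_0 := \min(X_1, 1/\sqrt 2)$; the bound then holds for all $a,b \in D$ with $X \le X_0$, without needing boundary regularity.

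\emph{Large-$X$ regime} ($X \ge X_0$). Since $c > 1$, the function $X \mapsto (1+cX)/(1+X)$ is strictly increasing, so $\delta := 2\log\frac{1+cX_0}{1+X_0} > 0$ is a uniform lower bound for $2\log\frac{1+cX}{1+X}$ on $[X_0, \infty)$. By Theorem~\ref{main} (resp.\ Proposition~\ref{plane}), choose a neighborhood $U$ of $\zeta$ with $h_D - s_D \le \delta/2$ on $D \cap U$; then for $a,b \in D \cap U$ with $X \ge X_0$,
$$h_D(a,b) \le s_D(a,b) + \delta/2 \le 2\log(1+X) + \delta/2 \le 2\log(1+cX),$$
which completes the proof. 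The main obstacle is the recognition that the $o(1)$ control from Theorem~\ref{main} cannot absorb the target as $X \to 0$, which forces the separate elementary straight-line estimate in the small-$X$ case; once that is in place, the large-$X$ case is essentially a one-line consequence of Theorem~\ref{main}.
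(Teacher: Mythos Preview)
Your proof is correct, and the large-$X$ regime is handled essentially as in the paper: use the additive control $h_D \le s_D + \mbox{const}$ from Theorem~\ref{main}/Proposition~\ref{plane} together with $s_D \le 2\log(1+X)$ and a fixed positive gap between $2\log(1+cX)$ and $2\log(1+X)$ when $X$ is bounded below.

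The genuine difference is in the small-$X$ regime. The paper does \emph{not} use a straight-line estimate there; instead it invokes Proposition~\ref{C1} (the ratio result $h_D/s_D\to 1$, valid already under $\mathcal C^1$ smoothness) to get $h_D \le c' s_D$ near $\zeta$, and then the elementary inequalities $\sinh^{-1}(t/2)<\log(1+t)$ and $(1+t)^{c'}<1+(2c'-1)t$ for $0<t<1$ finish the case $X<1$. Your argument replaces this appeal to Proposition~\ref{C1} by the direct bound $h_D(a,b)\le -\log(1-\|a-b\|/d_D(a))$ obtained by integrating along the segment $[a,b]\subset B(a,d_D(a))\subset D$, combined with the Lipschitz inequality $y\le\phi(X)$ and a first-order analysis of $(1+cX)^2(1-X\phi(X))$ at $X=0$. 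This is more elementary and, as you note, needs no boundary regularity whatsoever in that regime; the trade-off is that the paper's route is a couple of lines once Proposition~\ref{C1} is available, whereas yours requires setting up the threshold $X_0$ and the auxiliary function. Both decompositions are natural, and your observation that the small-$X$ bound is a purely interior fact is a nice strengthening.
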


The rest of the paper is organized as follows: Section 2 contains the proofs
of Propositions \ref{C1}, \ref{plane} and Corollary \ref{cor}. Section 3 contains
the proof of Theorem \ref{main}. It should be mentioned that the three proofs use
different flattening maps. Section 4 contains the proof of a result analogous to
Corollary \ref{cor} for the Kobayashi distance.

\section{Proofs of Propositions \ref{C1}, \ref{plane} and Corollary \ref{cor}}

\noindent{\it Proof of Proposition \ref{C1}}. After translation and rotation,
we may assume that $\zeta=0$ and that there is a neighborhood $U$ of $0$ such that
$$
D':=D\cap U=\{x\in U: r(x):=x_1+f(x')>0\},
$$
where points of $\R^n$ are denoted by $x=(x_1,x')$, with $x'\in \R^{n-1}$, and
$f$ is a $\mathcal C^1$-smooth function in $\R^n$ with $f(0)=0$ and $\nabla f(0)=0.$

Let $c>1$ and $\theta(x)=(r(x),x').$ We may shrink $U$ such that
\begin{equation}\label{c}
c^{-1}||x-y||\le||\theta(x)-\theta(y)||\le c||x-y||,\quad x,y\in U.
\end{equation}

Choose now a neighborhood $V\subset U$ of $0$ such that $d_{D'}=d_D$ on $D\cap V.$
The regularity of $D$ implies that it is a {\it uniform domain} near $\zeta$ in the
sense of \cite{GO}. Using, for example, \cite[Corollary 2]{GO}, one can find a neighborhood
$W\subset V$ of $0$ such that any geodesic joining points in $\tilde D=D\cap W$ is contained
in $D\cap V.$ Then $h_D=h_{D'}$ on $\tilde D^2.$

Set $\Bbb R^n_+=\{x\in\R^n:x_1>0\}.$ Using the above arguments,
we may shrink $W$ such that $h_{\RRR}=h_{\theta(D')}$ on $(\theta(\tilde D))^2.$

On the other hand, \eqref{c} implies that (cf. \cite[Exercise 3.17]{Vuo})
$$c^{-2}h_{D'}(z,w)\le h_{\theta(D')}(\theta(z),\theta(w))\le c^2h_{D'}(z,w),\quad z,w\in D'.$$

Let $z,w\in \tilde D.$ Then
$$c^{-2}h_D(z,w)\le h_{\RRR}(\theta(z),\theta(w))\le c^2h_D(z,w).$$
Using \eqref{c} again, we get that
\begin{align*}h_{\RRR}(\theta(z),\theta(w))&=2\sinh^{-1}\frac{||\theta(z)-\theta(w)||}{2\sqrt{r_D(z)r_D(w)}}\\
&\le2\sinh^{-1}\frac{c^2||z-w||}{2\sqrt{d_D(z)d_D(w)}}\le c^2s_D(z,w).
\end{align*}
We obtain in the same way that
$$h_{\RRR}(\theta(z),\theta(w))\ge c^{-2}s_D(z,w).$$
So
$$c^{-4}h_D(z,w)\le s_D(z,w)\le c^4h_D(z,w)$$
which implies the desired result.\qed
\medskip

\noindent{\it Proof of Proposition \ref{plane}}. We may find a neighborhood $U$
of $\zeta$ such that $D\cap U$ is a bounded simply connected log-Dini smooth domain.
Using an argument from the previous proof, we may replace $D$ by $D\cap U.$

The Kellogg--Warschawski theorem (cf. \cite[Theorem 3.5]{Pom}) implies that there exists a conformal
map $\tilde f$ from the unit disc $\Bbb D$ to $D$
which extends to a $\mathcal C^1$-diffeomorphism  between
$\overline{\Bbb D}$ to $\overline D$ such that $\tilde f(\zeta)=1$ and
$$|\tilde f'(z)-\tilde f'(w)|\le\om^\ast(|z-w|),\quad z,w\in\Bbb D,$$ where
$\ds\tom^\ast(s)=\int_0^s\frac{\tom(t)}{t}dt +s\int_s^{+\infty}\frac{\tom(t)}{t^2}dt$ ($s\ge 0$)
and $\tom:\R^+\to\R^+$ is a bounded continuous function with
$\ds\int_0^1\tom(t)\frac{\log t}{t}dt>-\infty.$

Then $\ds f(z)=\tilde f\left(\frac{1-z}{1+z}\right)$ maps conformally $\RR$ onto $D$ and
$$|f'(z)-f'(w)|\le\om^\ast(|z-w|),\quad z,w\in G=\RR\cap\Bbb D,$$ where
$\om^\ast$ is defined in the same way as $\tom^\ast.$

The equality
$$f(w)-f(z)-f'(z)(w-z)=(w-z)\int_{0}^1\left(f'(z+t(w-z))-f'(z)\right)dt$$
implies that $$|f(w)-f(z)-f'(z)(w-z)|\le|w-z|\om^\ast(|w-z|)$$
(since $\om^\ast$ is an increasing function). It follows that
\begin{equation}\label{d}
|d_D(f(z))-|f'(z)|d_{\RR}(z)|\le d_{\RR}(z)\om^\ast(d_{\RR}(z)),\quad z\in G.
\end{equation}

Since $D$ is a uniform domain, there exists a neighborhood $V$ of $\zeta$ such that
any geodesic $\g$ joining points $a=f(\alpha)$ and $b=f(\beta)$ in $D\cap V$ is contained in $f(G).$
It follows by \eqref{d} that one may find a constant $C>0$ (independent of $a$ and $b$)
such that
$$h_{\RR}(\alpha,\beta)\le\int_{f^{-1}\circ\g}\frac{|du|}{d_{\RR}(u)}\le\int_\g\frac{|dv|}{d_D(v)}+
C\int_\g\frac{\om^\ast(d_D(v))}{d_D(v)}|dv|.$$

The first summand is equal to $h_D(a,b).$

We claim that the second summand tends to 0 as $a,b\to\zeta.$ Indeed,
denote by $t$ the natural parameter of $\g$ by arc length and by $l=l(\g)$ the Euclidean length
of $\g.$ Since $D$ is a uniform domain, then \cite[Corollary 2]{GO} provides a constant
$c>0$ (independent of $a$ and $b$) such that $c\cdot l\le|a-b|$ and
$d_D(\g(t))\ge c\cdot \max\{t,l-t\}.$ Using that $\ds\frac{\om^\ast(s)}{s}$ is a decreasing function,
we get
$$\int_\g\frac{\om^\ast(d_D(v))}{d_D(v)}|dv|\le\frac{2}{c}\int_0^{cl/2}\frac{\om^\ast(t)}{t}dt.$$
It is easy to check the log-Dini condition for $\om$ is is equivalent to the fact that the last integral
tends to $0$ as $l\to 0$ which implies our claim.

Hence
$$\liminf_{a,b\to\zeta}(h_D(a,b)-h_{\RR}(\alpha,\beta))\ge 0.$$

The opposite inequality
$$\limsup_{a,b\to\zeta}(h_D(a,b)-h_{\RR}(\alpha,\beta))\le 0$$
follows in the same way by taking the geodesic joining $\alpha$ and $\beta.$

Using \eqref{d}, we have that
\begin{equation}
\label{asymp}
\lim_{\substack{a,b\to\zeta\\a\neq b}}\frac{|a-b|}{2\sqrt{d_D(a)d_D(b)}}\cdot
\frac{2\sqrt{d_{\RR}(\alpha)d_{\RR}(\beta)}}{|\alpha-\beta|}=1.
\end{equation}
Since $h_{\RR}=s_{\RR}$ and 
$\sinh^{-1}qt < \log q + \sinh^{-1}t $ for $q>1,$ $t>0,$ then
$$\lim_{a,b\to\zeta}(s_D(a,b)-h_{\RR}(\alpha,\beta))=0$$
which completes the proof.\qed
\medskip

\noindent{\it Proof of Corollary \ref{cor}}. We may assume that $c=2c'-1\in(1,3].$
By Proposition \ref{C1}, Theorem \ref{main} and Proposition \ref{plane},
one may find a neighborhood $U$ of $\zeta$ such that for $a,b\in D\cap U,$
$$h_D(a,b)\le c's_D(a,b),\quad h_D(a,b)\le s_D(a,b)+\log c'.$$
Then the result follows by the inequalities $\ds \sinh^{-1}\frac{t}{2}<\log(1+t)$ ($t>0$),
$(1+t)^{c'}<1+ct$ ($0<t<1$) and $c'(1+t)<1+ct$ ($t>1$).\qed

\section{Proof of Theorem \ref{main}}

Theorem \ref{main} will follow from Propositions \ref{liminf} and \ref{limsup} below.

For convenience, we assume that $D$ is a domain in $\R^{n+1}$ ($n\ge 1$).

We first localize the problem.  We choose local coordinates so that $\zeta = 0$
and $T_0\partial D = \{0\} \times \R^n$.

Denote points in $\R^{n+1}$ by $\bar x = (x_0, x) \in \R \times \R^{n}$.
We also write $\R^{n+1}_+= \{\bar x\in\R^{n+1}:x_0>0\}$.

There are a ball $\mathcal U \subset \R^{n+1}$ centered at $(0,0)$ and
a function $f \in \mathcal C^{1,1}(\mathcal U\cap\R^n,\R)$
such that $f(0)=0$ and $Df(0)=0$ and
\begin{equation}
\label{localform}
D\cap\mathcal U= \left\{\bar x\in\mathcal U:x_0 > f(x) \right\}.
\end{equation}

By shrinking the radius of $\mathcal U$ further we may assume that the projection which to
$\bar x\in \mathcal U \cap D$ associates $\pi(\bar x)$, the closest point
in $\partial D$ is well-defined, and that $\mathcal U \subset \pi^{-1} (\mathcal U \cap D)$
(see \cite[Lemma 4.11]{Fed}, or the proof of Lemma \ref{norm} (1) below).

\begin{proposition}\label{liminf}
$\ds\liminf_{a,b \to 0} \left( h_D(a,b) - s_D (a,b) \right) \ge 0.$
\end{proposition}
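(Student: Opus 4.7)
The plan is to localize via a flattening map $\Phi:D\cap\mathcal U\to\R^{n+1}_+$ sending the $\mathcal C^{1,1}$ boundary piece $\partial D\cap\mathcal U$ onto $\{0\}\times\R^n$, and then compare $h_D$ near $0$ to $h_{\R^{n+1}_+}=s_{\R^{n+1}_+}$ via the half-space identity. A natural choice is $\Phi(\bar x)=(x_0-f(x),x)$. The $\mathcal C^{1,1}$ hypotheses on $f$ give $|f(x)|=O(||x||^2)$ and $||Df(x)||=O(||x||)$ on a neighborhood of $0$.

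From these I would extract three quantitative comparisons as $\bar x,\bar a,\bar b$ approach $0$: first, $d_D(\bar x)=(x_0-f(x))(1+O(||x||^2))$, obtained by Taylor-expanding the minimization defining $d_D$ around the boundary point $(f(x),x)$; second, $||\Phi(\bar a)-\Phi(\bar b)||=||\bar a-\bar b||(1+O(r))$ with $r=\max(||\bar a||,||\bar b||)$; third, the singular values of $D\Phi(\bar x)$ lie in $[1-C||x||,1+C||x||]$. Combined with the uniform-domain argument from \cite{GO} already used in the proof of Proposition \ref{C1}, these imply that any quasi-hyperbolic geodesic $\gamma$ joining $\bar a$ and $\bar b$ stays within $D\cap B(0,R)$ for some $R$ with $R\to 0$ as $\bar a,\bar b\to 0$.

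Pulling back the length integral for such a $\gamma$ through $\Phi$ then gives
\[
h_D(\bar a,\bar b)=\int_\gamma\frac{||du||}{d_D(u)}\ge(1-CR)\int_{\Phi(\gamma)}\frac{||dv||}{v_0}\ge(1-CR)\,s_{\R^{n+1}_+}(\Phi(\bar a),\Phi(\bar b)),
\]
using $h_{\R^{n+1}_+}=s_{\R^{n+1}_+}$. Comparing $s_{\R^{n+1}_+}(\Phi(\bar a),\Phi(\bar b))$ with $s_D(\bar a,\bar b)$ through the elementary inequality $\sinh^{-1}(qt)\le\log q+\sinh^{-1}(t)$ for $q>1$ (already used in the proof of Proposition \ref{plane}) together with the first two estimates above yields $s_{\R^{n+1}_+}(\Phi(\bar a),\Phi(\bar b))\ge s_D(\bar a,\bar b)-CR$, hence $h_D(\bar a,\bar b)-s_D(\bar a,\bar b)\ge -CR\,(1+s_D(\bar a,\bar b))$.

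The main obstacle is that $s_D(\bar a,\bar b)$ may well be unbounded as $\bar a,\bar b\to 0$ — precisely when $d_D(\bar a)d_D(\bar b)=o(||\bar a-\bar b||^2)$ — in which case the product $R\cdot s_D$ need not vanish, and the crude multiplicative distortion of $\Phi$ is insufficient. The hard part of the proof will be to upgrade the multiplicative estimate $h_D\ge(1-o(1))\,s_D$ of Proposition \ref{C1} to the claimed additive bound $h_D\ge s_D-o(1)$. I expect this requires either a finer flattening adapted to normal coordinates (exploiting the $\mathcal C^{1,1}$-regularity of the distance function $d_D$ on a collar of $\partial D$, cf.\ \cite{Fed}), so as to take advantage of the $O(||x||^2)$ accuracy in the first estimate rather than the cruder $O(||x||)$ appearing in the third; or, alternatively, a blow-up/rescaling argument normalizing $(D,\bar a,\bar b)$ so that the rescaled domain converges in the Hausdorff sense to the tangent half-space, where $h=s$ holds identically.
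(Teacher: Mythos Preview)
Your diagnosis is accurate: the map $\Phi(\bar x)=(x_0-f(x),x)$ yields only a multiplicative distortion $1+O(\|x\|)$, producing an error $O(R)\cdot s_D$ that need not vanish when $s_D\to\infty$. You also correctly guess that normal coordinates are the remedy, but you misidentify the mechanism. The point is not to sharpen the $d_D$ comparison from $O(\|x\|^2)$ to something finer; rather, with the normal-coordinate map $\varphi(\bar x)=(f(x),x)+x_0\,n_x$ of Lemma~\ref{norm} two things happen simultaneously. First, $d_D(\varphi(\bar x))=x_0$ holds \emph{exactly}, so there is no error in the denominator at all. Second, and this is what you are missing, the differential satisfies $\|D\varphi(\bar x)\cdot v\|\ge(1-Cx_0)\|v\|$, with distortion proportional to $x_0=d_D$ rather than to $\|x\|$. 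This upgrade is decisive, because now
\[
\frac{\|D\varphi(\bar x)\cdot v\|}{d_D(\varphi(\bar x))}\ \ge\ \frac{(1-Cx_0)\|v\|}{x_0}\ =\ \frac{\|v\|}{x_0}-C\|v\|,
\]
so pulling back the $h_D$-geodesic $\gamma=\varphi(\tilde\gamma)$ gives
\[
h_D(a,b)\ \ge\ \int_{\tilde\gamma}\frac{\|dv\|}{v_0}\ -\ C\,l(\tilde\gamma)\ \ge\ h_{\R^{n+1}_+}(\bar\alpha,\bar\beta)\ -\ C'\|\bar\alpha-\bar\beta\|,
\]
an \emph{additive} error bounded by Euclidean arc length, which tends to $0$. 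The remaining step $s_{\R^{n+1}_+}(\bar\alpha,\bar\beta)-s_D(a,b)\to 0$ then goes through exactly as you indicate.

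As it stands your proposal is not a proof: the route via $\Phi$ is a dead end (as you concede), and your speculation about normal coordinates stops short of the key observation that the distortion of $D\varphi$ scales with $d_D$ rather than with $\|x\|$, which is precisely what converts the multiplicative loss into an additive one. Your alternative blow-up suggestion would face its own uniformity issues and is not the route taken here.
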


We can define a map $\varphi$ on $\mathcal U$ by
$$\varphi  (\bar x) = \left( f(x), x \right) + x_0 n_x,$$
where $n_x$ is the inward unit normal to $\partial D$ at the point $( f(x), x)$.

\begin{lemma}\label{norm} (1) There exists a ball $\mathcal U_0 \subset \mathcal U$
centered at $0$ such that $\varphi|_{\mathcal U_0}$ is a bilipschitz homeomorphism
and for any $\bar x \in \mathcal U_0 \cap \R^{n+1}_+$,
$$d_D \varphi (\bar x)= \| \varphi (\bar x) - (f(x), x)\| = x_0.$$

\noindent(2) Furthermore, if $f \in \mathcal C^{\alpha}(\mathcal U\cap\R^n, \R)$, for some
$\alpha\ge 2$, then $\varphi|_{\mathcal U_0}$ is a $\mathcal C^{\alpha-1}$-diffeomorphism,
and there exists a ball $\mathcal U_1 \subset \mathcal U_0$ centered at $0$ and a
constant $C>0$ such that for any $\bar x \in \mathcal U_1 \cap \R^{n+1}_+$ and any
vector $v\in \R^{n+1}$,
$$\| D\varphi(\bar x)\cdot v\|\ge(1-Cx_0)\|v\|.$$
where $D\varphi (\bar x)$ stands for the differential of $\varphi$ taken
at the point $\bar x$.

\noindent(3) In the general case where $f \in \mathcal C^{1,1}(\mathcal U\cap\R^n, \R)$,
then there exists a $C>0$ such that
for any $\mathcal C^1$ curve $\gamma: [t_1,t_2] \longrightarrow \mathcal U_1 \cap  \R^{n+1}_+$,
$\varphi \circ \gamma$ is rectifiable and for any $F \in \mathcal C([t_1,t_2], \R_+)$,
$$
\int_{t_1}^{t_2} F(t) |d \varphi \circ \gamma (t)|
\ge
\int_{t_1}^{t_2} F(t) |d  \gamma (t)| - C \int_{t_1}^{t_2} F(t) d_D (\gamma (t)) |d  \gamma (t)|.
$$
\end{lemma}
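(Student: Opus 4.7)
The plan is to organize the three parts around a single expansion of $\|\varphi(\bar x) - \varphi(\bar y)\|^2$. Writing $\Phi(x) := (f(x), x)$, so that $\varphi(\bar x) = \Phi(x) + x_0 n_x$, one has
\begin{equation*}
\|\varphi(\bar x) - \varphi(\bar y)\|^2 = \|\Phi(x) - \Phi(y)\|^2 + 2\langle \Phi(x) - \Phi(y),\, x_0 n_x - y_0 n_y\rangle + \|x_0 n_x - y_0 n_y\|^2.
\end{equation*}
For (1), the key input is that the cross term is second order in $\|x-y\|$: since $n_x = (1, -\nabla f(x))/\sqrt{1+\|\nabla f(x)\|^2}$, its numerator against $\Phi(x)-\Phi(y)$ equals $f(x) - f(y) - \nabla f(x)\cdot(x - y)$, and the Lipschitz continuity of $\nabla f$ (constant $L$, say) together with Taylor's theorem bounds it in absolute value by $\tfrac{L}{2}\|x-y\|^2$. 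Using also $\|\Phi(x) - \Phi(y)\|^2 \ge \|x-y\|^2$ and $\|x_0 n_x - y_0 n_y\|^2 \ge (x_0-y_0)^2$, this yields $\|\varphi(\bar x) - \varphi(\bar y)\|^2 \ge (1 - L(x_0 + y_0))\|\bar x - \bar y\|^2$. Choosing $\mathcal U_0$ small enough that the prefactor stays uniformly positive gives the bilipschitz lower bound; the matching upper bound is routine from the Lipschitz continuity of $\Phi$ and $n$. Specializing to $y_0 = 0$ in the same expansion produces $\|\varphi(\bar x) - (f(y), y)\|^2 \ge x_0^2 + (1 - Lx_0)\|x-y\|^2 \ge x_0^2$, with equality only at $y = x$; combining this with the projection hypothesis on $\mathcal U$ (which forces the nearest boundary point to $\varphi(\bar x)$ to lie in the local graph part of $\partial D$) identifies $(f(x),x)$ as the unique nearest boundary point, so $d_D(\varphi(\bar x)) = x_0$.

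For (2), the stronger regularity $f \in \mathcal C^\alpha$ with $\alpha \ge 2$ makes $n_x \in \mathcal C^{\alpha-1}$ and so $\varphi \in \mathcal C^{\alpha-1}$. Direct differentiation gives
\begin{equation*}
D\varphi(\bar x)\cdot(v_0, v') = v_0 n_x + D\Phi(x)\,v' + x_0 \sum_{i=1}^n v_i\,\partial_{x_i} n_x.
\end{equation*}
The essential observation is that $n_x$ is orthogonal to the range of $D\Phi(x)$, whose columns span $T_{\Phi(x)}\partial D$; hence $\|v_0 n_x + D\Phi(x)v'\|^2 = v_0^2 + \|D\Phi(x)v'\|^2 \ge v_0^2 + \|v'\|^2 = \|v\|^2$, and the remaining term is bounded by $x_0 \sup_{\mathcal U_1}\|Dn\|\cdot\|v\|$. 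Shrinking to $\mathcal U_1$ so that $\|Dn\|$ is uniformly bounded produces $\|D\varphi(\bar x)v\| \ge (1 - Cx_0)\|v\|$.

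For (3), merely Lipschitz regularity of $\varphi$ prevents a pointwise derivative argument, so I would transfer the Part (1) chord estimate to an integral statement via a polygon approximation. Taking square roots in the Part (1) estimate and using $\sqrt{1-t} \ge 1-t$ for $t \in [0,1]$ gives
\begin{equation*}
\|\varphi(\bar x) - \varphi(\bar y)\| \ge \|\bar x - \bar y\| - L(x_0 + y_0)\|\bar x - \bar y\|.
\end{equation*}
I apply this with $(\bar x, \bar y) = (\gamma(s_i), \gamma(s_{i+1}))$ along a partition $t_1 = s_0 < \cdots < s_N = t_2$, weight by $F(\xi_i)$, and sum. As the mesh tends to zero, the left-hand sums converge to $\int F\,|d\varphi\circ\gamma|$ (since $\varphi\circ\gamma$ is Lipschitz and its length is the supremum of such weighted polygon sums), the unperturbed right-hand sums to $\int F\,|d\gamma|$, and the error to $C\int F(t)\,x_0(t)\,|d\gamma(t)|$ by uniform continuity of $F$ and of $x_0$ on $[t_1, t_2]$; identifying $x_0(t) = d_D(\varphi\circ\gamma(t))$ via Part (1) yields the claim. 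I expect the substantive work to be concentrated in Part (1) — specifically, in extracting the second-order Taylor bound on the cross term under only $\mathcal C^{1,1}$ regularity — with Parts (2) and (3) then following by rather standard manipulations.
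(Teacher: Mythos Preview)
Your argument is correct. Parts (1) and (2) match the paper's approach closely: for (1) the paper simply cites Federer for the bilipschitz statement and sketches only the positive-reach computation (essentially your specialization to $y_0=0$), whereas you carry out the full two-point expansion; for (2) both you and the paper compute $D\varphi$, split off the $x_0\,\partial_{x_j} n_x$ term, and use the orthogonality $n_x\perp T_{\Phi(x)}\partial D$ to get $\|v_0 n_x + D\Phi(x)v'\|\ge\|v\|$.

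Part (3) is where the two genuinely diverge. The paper invokes Rademacher's theorem: $\varphi\circ\gamma$ is Lipschitz, hence a.e.\ differentiable, and one then feeds the pointwise bound of Part (2) into the integral a.e. Your route instead lifts the \emph{chord} estimate of Part (1) to the integral via weighted polygon approximation. Both work. The paper's version is shorter to write but hides a small subtlety: the image of $\gamma$ is a null set in $\R^{n+1}$, so one cannot literally apply $D\varphi(\gamma(t))$ at a.e.\ $t$ via Rademacher on $\varphi$; one must either argue via the metric derivative of $\varphi\circ\gamma$ or, in effect, fall back on your chord inequality in the limit. Your approach avoids this entirely and is more elementary, at the price of checking that $\sum_i F(\xi_i)\,\|\varphi\circ\gamma(s_{i+1})-\varphi\circ\gamma(s_i)\|\to\int F\,|d(\varphi\circ\gamma)|$; since $\varphi\circ\gamma$ is Lipschitz and $F$ is continuous on a compact interval, the difference between chord sums and arc-length sums tends to zero and this is routine.
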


\begin{proof}
Part (1) of the lemma is classical (see \cite[Theorem 4.8]{Fed}).
The main point is to prove that the domain has positive {\it reach,} that is to say that
there exists $\delta>0$ such that if $x \in D$ and $d_D(x)<\delta$, then this distance is attained
at a single point, which will be the intersection of $\partial D$ and the unique normal line to it
containing $x$ (see \cite{Fed}).
In other words, for $x \in \mathcal U$ well chosen and $x_0<\delta$, $\varphi$ is one-to-one.

We quickly recall the proof.  Suppose $\| \nabla f (x) - \nabla f(x')\| \le L \|x-x'\|$
for $(0,x), (0,x') \in \mathcal U_1$,  then, taking without loss of generality the projection to
$\partial D$ to be $(0,0)$, for some $\theta \in (0,1)$,
\begin{multline*}
\left\| (y_0,0) - (f(x),x)\right\|^2 = y_0^2  - 2 y_0 \nabla f (\theta x) \cdot x + f(x)^2 + \|x\|^2
\\
\ge y_0^2+ \|x\|^2 - 2 y_0 L \|x\|^2 >y_0^2
\end{multline*}
for $y_0 < 1/2L$ and $x\neq 0$.

Notice that a lemma in \cite[Appendix]{GT}, explained in detail in \cite{KP},
shows that even though $n_x$ can only be expected to be continuous with bounded derivatives,
and in general of class $\mathcal C^{\alpha-1}$ when $\varphi \in \mathcal C^{\alpha}$,
the function $\bar x\mapsto d_D(\bar x)$ has the same regularity as $\varphi$.

We now prove part (2). Let $(e_0, e_1, \dots , e_n)$ be the standard basis of $\R^{n+1}$.
Let $\ds \tilde e_j = \frac{\partial f}{\partial x_j} (x) e_0 + e_j$, for $1\le j\le n$. They
form a basis of the tangent space to $\partial D$ at $(x,f(x))$
and $\langle n_x, \tilde e_j \rangle=0$ for $1\le j\le n$.

Then $D\varphi (\bar x) \cdot e_0 = n_x$, and
$\ds D\varphi (\bar x) \cdot e_j = \tilde e_j + x_0 \frac{\partial n_x}{\partial x_j}$, for $1\le j\le n$.

Given $v= \sum_0^n v_j e_j$,
$$
D\varphi (\bar x) \cdot v = \left( v_0 n_x + \sum_1^n v_j \tilde e_j \right) + x_0 \sum_1^n v_j
\frac{\partial n_x}{\partial x_j} =: V_1 + V_0.
$$
Clearly, $\| V_0 \| = O(x_0) \|v\|$. By the orthogonality of $n_x$ to the tangent space,
\begin{multline*}
\| V_1 \|^2 = v_0^2 + \left\|\sum_1^n v_j \tilde e_j  \right\|^2
= v_0^2 + \left\|\sum_1^n v_j e_j  + \left(\sum_1^n \frac{\partial f}{\partial x_j} (x)\right)e_0\right\|^2
\\
= v_0^2 + \sum_1^n v_j^2 + \left|\sum_1^n \frac{\partial f}{\partial x_j} (x)\right|^2 \ge \|v\|^2.
\end{multline*}

In the case where $f \in \mathcal C^{1,1}$, then $\varphi \circ \gamma$ is only a Lipschitz map.
By Rademacher's theorem (see e.g. \cite[Theorem 3.1.6]{Fe2}), it is almost everywhere differentiable
and the fundamental theorem of calculus holds. We then perform the same calculation as in case (2),
where the integrands are defined a.e.
\end{proof}

\noindent{\it Proof of Proposition \ref{liminf}}. Using Lemma \ref{norm},
the proof repeats the second part of the proof of Proposition \ref{plane}.
Suppose that $\zeta=0$ and that the domain $D$ is given by a local representation as above.
We may assume that the points $a,b\in D$ are in a small
enough neighborhood of $0$ so that the geodesic $\g$ which joins them is entirely
contained in the range of invertibility of $\varphi$ and Lemma \ref{norm} holds; we write
$a=\varphi(\bar \alpha)$, $b=\varphi(\bar \beta)$, $\g=\varphi(\tilde\g)$,
where $\tilde\g$ is an arc in $\R^{n+1}_+$. Then
$$
h_D(a,b) =
\int_\g\frac{||du||}{d_D(u)}\ge\int_{\tilde\g}\frac{||dv||}{d_{\R^{n+1}_+}(v)}
-C \cdot l(\tilde\g)\ge h_{\R^{n+1}_+}(\bar \alpha,\bar \beta)-C'||\bar\alpha-\bar\beta||,
$$
where $C'>0$ is a constant independent of $a$ and $b.$ Note that
$h_{\R^{n+1}_+}=s_{\R^{n+1}_+}.$ Since the differential of
$\varphi$ at $\bar x$ tends to the identity as $x\to 0,$ it follows that
$$\lim_{a,b\to\zeta}(s_{\R^{n+1}_+}(\bar \alpha,\bar \beta)-s_D(a,b))=0$$
which completes the proof.\qed

\begin{proposition}\label{limsup}
$\ds\limsup_{a,b \to 0} \left( h_D(a,b) - s_D (a,b) \right) \le 0.$
\end{proposition}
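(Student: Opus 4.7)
My plan is to mirror the proof of Proposition \ref{liminf} with the roles of $D$ and $\R^{n+1}_+$ reversed. For $a,b$ in a small ball around $\zeta=0$, I set $\bar\alpha=\varphi^{-1}(a)$ and $\bar\beta=\varphi^{-1}(b)$ in $\R^{n+1}_+$, and let $\tilde\gamma$ be the quasi-hyperbolic geodesic of $\R^{n+1}_+$ joining them---the semicircular arc perpendicular to $\{x_0=0\}$, which I parametrize by angle as $\tilde\gamma(\theta)=(r\sin\theta,\,c+r\cos\theta\,w)$, $\theta\in[\theta_\alpha,\theta_\beta]\subset(0,\pi)$, with center $(0,c)$, radius $r$, and unit vector $w\in\R^n$. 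For $\bar\alpha,\bar\beta$ close enough to $0$, the arc lies in $\mathcal U_1\cap\R^{n+1}_+$, so $\gamma:=\varphi\circ\tilde\gamma$ is admissible for $h_D(a,b)$. Since Lemma \ref{norm}(1) gives $d_D\circ\varphi=x_0$, the target reduces to
$$\int_\gamma\frac{||du||}{d_D(u)}\le h_{\R^{n+1}_+}(\bar\alpha,\bar\beta)+o(1),\qquad\bar\alpha,\bar\beta\to 0,$$
after which $h_{\R^{n+1}_+}=s_{\R^{n+1}_+}$ and the convergence $s_{\R^{n+1}_+}(\bar\alpha,\bar\beta)-s_D(a,b)\to 0$ (which follows from $d_D(a)=\alpha_0$, $d_D(b)=\beta_0$, and $||a-b||/||\bar\alpha-\bar\beta||\to 1$, since $D\varphi(0)=I$) will finish the proof.

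For the displayed inequality I will establish the upper-bound companion to Lemma \ref{norm}(3). Reusing the decomposition $D\varphi(\bar x)v=V_1+V_0$ from the proof of Lemma \ref{norm}(2), extended a.e.\ to the $\mathcal C^{1,1}$ case via Rademacher as in part (3), the orthogonality of $n_x$ to the tangent plane gives
$$||V_1||^2=||v||^2+(\nabla f(x)\cdot v')^2,\qquad ||V_0||\le Cx_0||v'||,$$
where $v=(v_0,v')$ with $v'\in\R^n$. Plugging in $v=\tilde\gamma'(\theta)=(r\cos\theta,-r\sin\theta\,w)$ produces $||v||=r$ and, crucially, $||v'||=r\sin\theta=x_0$. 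Using $|\nabla f(x)|\le L||x||$ in the $\mathcal C^{1,1}$ case, this yields $||V_1||\le r+L||x||\,x_0$ and $||V_0||\le Cx_0^2$. Dividing by $x_0$ and integrating in $\theta$ bounds the left-hand side by
$$h_{\R^{n+1}_+}(\bar\alpha,\bar\beta)+L\int_{\theta_\alpha}^{\theta_\beta}||x(\theta)||\,d\theta+C\int_{\theta_\alpha}^{\theta_\beta}x_0(\theta)\,d\theta.$$

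To show that the error integrals vanish, I will use the identity $||\tilde\gamma(\theta)||^2=r^2+||c||^2+2r(c\cdot w)\cos\theta$, obtained by direct expansion, which is affine, hence monotone, in $\cos\theta$ on $[\theta_\alpha,\theta_\beta]\subset[0,\pi]$. Therefore $||\tilde\gamma(\theta)||^2$ is a convex combination of $||\bar\alpha||^2$ and $||\bar\beta||^2$, which gives $||x(\theta)||,\,x_0(\theta)\le\max(||\bar\alpha||,||\bar\beta||)$; both error integrals are then at most $\pi\max(||\bar\alpha||,||\bar\beta||)$ times a constant and tend to $0$.

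The main obstacle to overcome is that a naive derivative bound $||D\varphi(\bar x)-I||=O(||x||+x_0)$ would, after division by $x_0$, generate a potentially divergent error $\int(||x||/x_0)\,||d\tilde\gamma||$ that need not vanish and may even dominate when $s_D\to\infty$, preventing a direct reduction to the ratio convergence of Proposition \ref{C1}. The decisive structural fact is that along the half-space semicircle $||v'||$ equals $x_0$ exactly: this cancels the singular denominator inside the offending $(\nabla f\cdot v')^2/x_0^2$ contribution and reduces the leftover to $|\nabla f(x)|=O(||x||)$ integrated against $d\theta$ on a bounded angular interval, which is controlled by the endpoint maximum via the affine-in-$\cos\theta$ identity above.
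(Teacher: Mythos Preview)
Your argument is correct and takes a genuinely different route from the paper's. The paper does \emph{not} reuse the map $\varphi$ of Lemma \ref{norm}; precisely because that map only satisfies $\|V_1\|\ge\|v\|$ (with the excess $(\nabla f(x)\cdot v')^2$), the authors introduce an auxiliary, $a,b$-dependent map $\varphi:\R^2_+\to D$ built from an arc-length parametrized boundary curve $\sigma$ joining the foot points $a',b'$, so that $(\sigma',n)$ is orthonormal and one obtains the clean pointwise bound $\|D\varphi(\bar x)v\|\le(1+Cx_0)\|v\|$ (their Lemma \ref{norm2}). This yields $h_D(a,b)\le s_{\R^2_+}(\alpha,\beta)+C\pi\|\alpha-\beta\|$ directly. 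Your approach instead keeps the original $(n{+}1)$-dimensional $\varphi$ and compensates for the lack of a uniform upper derivative bound by exploiting the special structure of the half-space semicircle: the identity $\|v'\|=x_0$ along $\tilde\gamma$ kills the $1/x_0$ singularity in both the $(\nabla f\cdot v')$ and the $V_0$ contributions, and the affine-in-$\cos\theta$ formula $\|\tilde\gamma(\theta)\|^2=r^2+\|c\|^2+2r(c\cdot w)\cos\theta$ confines the whole arc to the ball of radius $\max(\|\bar\alpha\|,\|\bar\beta\|)$, which both keeps $\tilde\gamma$ inside $\mathcal U_1$ and makes the error integrals $o(1)$. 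The paper's route is more conceptual and yields a reusable pointwise derivative estimate; yours is more direct, avoids introducing a second chart, and makes transparent exactly where the $\mathcal C^{1,1}$ hypothesis (via $|\nabla f(x)|\le L\|x\|$) enters.
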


The proof is similar to that of Proposition \ref{liminf}, using a modification
of the map $\varphi$ which depends on $a$ and $b.$

\begin{proof}
We again assume that $a, b \in D$, and the geodesic
connecting them, all lie in a neighborhood of $\zeta$ small enough
so that any point in it has a unique closest point on $\partial D$.
Let  $a',b'$ be the respective
closest points. We take new coordinates (and obtain a new function $f$)
so that $a'=0$ (instead of $\zeta=0$ as in the proof of Proposition \ref{liminf}) and
$$D\cap\mathcal U=\{\bar x\in\mathcal U:x_0>f(x_1,\dots,x_n)\}.$$
We may also assume that
$b'_2=\dots=b'_n=0.$ Shrinking the radius $r$ of $\mathcal U,$
we may replace $x_1$ by $\sigma_1(x_1)$
such that for $\sigma=(f(\sigma_1,0,\dots,0),\sigma_1, 0,\dots,0)$
one has $||\sigma'||=1$ (in other words, $\sigma$ is parametrized by arc length).
Note that $r$ can be chosen independently of $a$ and $b.$
Let $\ell$ be the length of the curve $\sigma$ from $a'$ to $b'$, so that
$\sigma (0)=a'$, $\sigma (\ell)=b'$.

Consider the map $\varphi$ from $\RR$ (near $0$) to $D$ defined by
$$\varphi(x_0,x_1) = \sigma (x_1) + x_0 n_{\sigma (x_1)},$$
where $n_{\sigma (x_1)}$ is the inward unit normal to $\partial D$ at the point
$\sigma (x_1).$ Then $d_D(\varphi (\bar x))=x_0$ if $x_0$ is small enough,
and if $\alpha = (d_D(a), 0)$ and $\beta = (d_D(b), \ell)$,
we have $\varphi (\alpha)=a$, $\varphi (\beta)=b$.

\begin{lemma}
\label{norm2} There exist a neighborhood $U$ of $\zeta,$ a neighborhood
$V$ of $0$ and a constant $C>0$ such that for any
$a,b \in D\cap U$ and $\bar x \in\RR\cap V$
and any vector $v\in \R^2$, then $\alpha, \beta \in V$ and
$$\| D\varphi(\bar x)\cdot v\|\le(1+Cx_0)\|v\|.$$
\end{lemma}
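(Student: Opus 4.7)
The plan is a direct computation of $D\varphi(\bar x)$ in the standard basis of $\R^2$, exploiting the orthogonality relations provided by the arc-length parametrization of $\sigma$ and the unit-norm condition on $n$. First I would write
\[
D\varphi(\bar x)\cdot e_0 = n_{\sigma(x_1)},\quad D\varphi(\bar x)\cdot e_1 = \sigma'(x_1) + x_0\,\frac{d}{dx_1}n_{\sigma(x_1)}.
\]
Since $f\in\mathcal C^{1,1}$, the inward unit normal $x\mapsto n_x$ is Lipschitz on $\partial D\cap\mathcal U$ with a constant $L$ controlled by the $\mathcal C^{1,1}$-norm of $f$ on $\mathcal U$. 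As $\sigma$ is itself $\mathcal C^{1,1}$ (the arc-length reparametrization of a $\mathcal C^{1,1}$-graph), the composition $x_1\mapsto n_{\sigma(x_1)}$ is Lipschitz, hence a.e.\ differentiable by Rademacher, with $\|\tfrac{d}{dx_1}n_{\sigma(x_1)}\|\le L$. Because $L$ is invariant under the isometric change of coordinates (translation plus rotation) that takes $a'$ to $0$ and aligns the inward normal at $a'$ with $e_0$, the constant is uniform in the choice of $a,b\in D\cap U$.

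Next come three elementary identities: (a) $\|\sigma'\|=1$ by arc-length; (b) differentiating $\|n_\sigma\|^2=1$ in $x_1$ yields $\langle n_\sigma,\tfrac{d}{dx_1}n_\sigma\rangle=0$; (c) $\sigma'$ is tangent to $\partial D$, so $\langle\sigma',n_\sigma\rangle=0$. For $v=v_0e_0+v_1e_1$, these make $v_0 n_\sigma$ orthogonal to $v_1\bigl(\sigma'+x_0\tfrac{d}{dx_1}n_\sigma\bigr)$, and expanding one obtains
\[
\|D\varphi(\bar x)\cdot v\|^2 = v_0^2 + v_1^2\left(1+2x_0\langle\sigma',\tfrac{d}{dx_1}n_\sigma\rangle + x_0^2\bigl\|\tfrac{d}{dx_1}n_\sigma\bigr\|^2\right).
\]
By Cauchy--Schwarz together with $\|\sigma'\|=1$, the parenthesized quantity is at most $(1+Lx_0)^2$, so
\[
\|D\varphi(\bar x)\cdot v\|\le(1+Lx_0)\|v\|,
\]
which is the claim with $C=L$.

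Finally I would select the neighborhoods. A Lemma \ref{norm}(1)-type argument applied in the new local coordinates (whose defining function has $\mathcal C^{1,1}$-norm bounded uniformly once $U$ is small enough) furnishes a ball $V\subset\R^2$ around $0$ on which $\varphi$ is a homeomorphism and $d_D(\varphi(\bar x))=x_0$. The neighborhood $U$ of $\zeta$ is then shrunk so that for $a,b\in D\cap U$ the quantities $d_D(a)$, $d_D(b)$, $\|a'-b'\|$, and hence $\ell$ (which is bounded by a constant times $\|a'-b'\|$ since $\sigma$ is $\mathcal C^{1,1}$ near $a'$), are all small enough that $\alpha=(d_D(a),0)$ and $\beta=(d_D(b),\ell)$ lie in $V$. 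The only delicate point is that $D\varphi$ is defined merely almost everywhere, but this is anticipated exactly as in Lemma \ref{norm}(3) and is precisely what is needed for the length-integral comparison to be invoked in the proof of Proposition \ref{limsup}.
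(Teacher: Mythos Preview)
Your proposal is correct and follows essentially the same route as the paper: compute $D\varphi(\bar x)\cdot e_0=n_{\sigma(x_1)}$ and $D\varphi(\bar x)\cdot e_1=\sigma'(x_1)+x_0\,\tfrac{d}{dx_1}n_{\sigma(x_1)}$, use that $(\sigma',n_\sigma)$ is orthonormal, bound the correction term by $x_0$ times the derivative of the normal, and invoke Rademacher in the $\mathcal C^{1,1}$ case. The only cosmetic difference is in how you justify the uniform bound on $\bigl\|\tfrac{d}{dx_1}n_{\sigma(x_1)}\bigr\|$: you appeal directly to the Lipschitz constant $L$ of the Gauss map (an isometry-invariant quantity, hence independent of the $a,b$-dependent coordinate change), whereas the paper phrases this geometrically via inner and outer tangent balls of fixed radius $\delta$, giving $\bigl\|\tfrac{d}{dx_1}n_{\sigma(x_1)}\bigr\|\le 1/\delta$. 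These are equivalent formulations of the same $\mathcal C^{1,1}$ control, and your explicit expansion of $\|D\varphi(\bar x)\cdot v\|^2$ using the additional orthogonality $\langle n_\sigma,\tfrac{d}{dx_1}n_\sigma\rangle=0$ is a clean way to reach the sharp form $(1+Lx_0)\|v\|$.
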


\begin{proof}
As in the proof of Lemma \ref{norm} (2), in the $\mathcal C^2$-smooth case,
$$
D \varphi (\bar x) \cdot e_0 = n_{\sigma (x_1)}, \quad
D \varphi (\bar x) \cdot e_1 = \sigma' (x_1) + x_0 \frac{\partial n_{\sigma (x_1)}}{\partial x_1}.
$$
Because $||\sigma'||=1$ and is tangent to $\partial D$,
$(\sigma'(x),n_x)$ form an orthonormal system,
so that $D\psi (\bar x)$ differs from a linear isometric embedding by a term bounded by
$\ds \left\| \frac{\partial n_{\sigma (x_1)}}{\partial x_1} \right\| x_0 $.

Geometric considerations show that
$\ds \left\| \frac{\partial n_{\sigma (x_1)}}{\partial x_1} \right\|\le\frac{1}{R} $
whenever there exist two balls $B_1, B_2$ of radius $R$, tangent to each side of
$\partial D$ at $\sigma (x_1)$.
The argument in the proof of Lemma \ref{norm} (1) shows there
exists $\delta >0$ (depending only on the neighborhood $\mathcal U_0$ mentioned in that lemma)
such that  there exist two such balls of radius $\delta$ at each point in $\mathcal U_0 \cap \partial D $.

As in the proof of Lemma \ref{norm} (3), the $\mathcal C^{1,1}$-smooth case follows by applying Rademacher's theorem.
\end{proof}

The proof of Proposition \ref{limsup} can be finished similarly to that of Proposition
\ref{liminf}. Let $\g$ be the geodesic joining $\alpha$ to $\beta$ in $\R^2_+.$
Let $U,V$ be as in Lemma \ref{norm2}. Shrinking $V$ if needed so that $\varphi(V)\subset U$,
we have $d_{D}(\varphi(u))=d_{\RR}(u)$ for any $u\in\g$.
Since $\varphi \circ \g $ is a curve joining $a$ to $b$
in $D$, using Lemma \ref{norm2}, we get
\begin{multline*}
h_D(a,b) \le
 \int_0^\ell \frac{\|D\varphi (\gamma(t))\cdot \gamma'(t) \|}{d_{D}( \varphi \circ \gamma(t))} dt\\
\le h_{\RR} (\alpha,\beta)+Cl(\g)<s_{\RR} (\alpha,\beta)+C\pi||\alpha-\beta||
\end{multline*}
(here $\pi$ is the Ludolphine number, not the projection). The differential of $\varphi$
is close to a linear isometric embedding of $\R^2$ in $\R^{n+1}$ and hence
we have the asymptotic relation \eqref{asymp} and
$$\lim_{a,b\to\zeta}(s_{\RR}(\alpha,\beta)-s_D(a,b))=0,
$$
which completes the proof.
\end{proof}

\section{An upper estimate for the Kobayashi distance}

Let $D$ be a domain in $\Bbb C^n.$ The Kobayashi (pseudo) distance
$k_D$ is
obtained from the Lempert function
\begin{multline*}
l_D(a,b)=\inf\{\tanh^{-1}|\alpha|:\exists\varphi\in\mathcal O(\Bbb D,D)
\hbox{ with }\varphi(0)=a,\varphi(\alpha)=b\},\\a,b\in D.
\end{multline*}
The Lempert function does not always satisfy the triangle inequality, but
setting
$$
k_D(a,b):= \inf \left\{ \sum_{j=0}^{m-1} l_D(a_j,a_{j+1}): a_j \in D, a_0=a,a_m=b, m\ge 1 \right\},
$$
one does obtain a (pseudo) distance, which is the largest that is
dominated by $l_D$.

Recall that $k_D$ is the integrated form of the Kobayashi (pseudo) metric
\begin{multline*}
\kappa_D(a;X)=\inf\{|\alpha|:\exists\varphi\in\mathcal O(\Bbb D,D)\hbox{
with }\varphi(0)=a,\alpha\varphi'(0)=X\},\\a\in D,\ X\in\Bbb C^n.
\end{multline*}

Note that Theorem \ref{NA} and Proposition \ref{plane} (even in the
Dini-smooth case) hold for $2k_D$ instead of $h_D$ (see \cite[Theorem 7]{NA} and
\cite[Proposition 6]{NTA}). Moreover, the following result corresponds to
Proposition \ref{C1}.

\begin{proposition}\cite[Proposition 5\hskip1pt(a)]{NA} If $\zeta$ is a $\mathcal C^1$-smooth
boundary point of a domain $D$ in $\Bbb C^n,$ then
$$\limsup_{\substack{a,b\to\zeta\\a\neq b}}\frac{2k_D(a,b)}{h_D(a,b)}\le 1.$$
\end{proposition}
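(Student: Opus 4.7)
The plan is to combine Proposition \ref{C1} with an upper bound on the Kobayashi distance near $\zeta$ obtained from explicit analytic disks in $D$ passing through $a$ and $b$.

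By Proposition \ref{C1}, $h_D(a,b) = (1+o(1))\, s_D(a,b)$ as $a,b \to \zeta$ with $a \ne b$, so it is equivalent to show $\limsup 2k_D(a,b)/s_D(a,b) \le 1$. Take local holomorphic coordinates so that $\zeta = 0$ and
$$D \cap U = \{z = (z_1, z') \in U : \mathrm{Re}\, z_1 > f(\mathrm{Im}\, z_1, z')\},$$
for some $f \in \mathcal C^1$ with $f(0) = 0$ and $df(0) = 0$; in particular $d_D(a) = (1+o(1))\,\mathrm{Re}\, a_1$ and $f(y_1, z') = o(|(y_1, z')|)$ as $a \to 0$.

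The key step is to construct, for each pair $a, b \in D \cap U$ sufficiently close to $0$, an analytic disk $\varphi : \Bbb D \to D$ with $\varphi(\alpha) = a$ and $\varphi(\beta) = b$ for some $\alpha, \beta \in \Bbb D$, in which the first component $\varphi_1$ is a conformal map of $\Bbb D$ onto a subregion of the right complex half-plane satisfying $\varphi_1(\alpha) = a_1$, $\varphi_1(\beta) = b_1$, and the transverse component $\varphi'$ is a holomorphic interpolation in $\Bbb D$ from $a'$ to $b'$. Then $k_D(a,b) \le k_{\Bbb D}(\alpha, \beta)$, and by conformal invariance this equals the hyperbolic distance in $\varphi_1(\Bbb D)$ between $a_1$ and $b_1$; provided the subregion is chosen close enough to the full half-plane, this is within $o(1)$ of $s_\RR((\mathrm{Re}\,a_1, \mathrm{Im}\,a_1), (\mathrm{Re}\,b_1, \mathrm{Im}\,b_1))/2$. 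Combined with $|a_1 - b_1| \le \|a - b\|$ and $\mathrm{Re}\,a_1 = (1+o(1))\,d_D(a)$, this yields $2 k_D(a,b) \le 2 k_{\Bbb D}(\alpha,\beta) \le (1+o(1))\,s_D(a,b)$.

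The main obstacle is to verify the containment $\varphi(\Bbb D) \subset D$, i.e., $\mathrm{Re}\,\varphi_1(\lambda) > f(\mathrm{Im}\,\varphi_1(\lambda), \varphi'(\lambda))$ for every $\lambda \in \Bbb D$. A conformal map of $\Bbb D$ onto the full right half-plane is unbounded at $\partial \Bbb D$, where the bound $f = o(|\cdot|)$ is too weak; one must therefore restrict $\varphi_1(\Bbb D)$ to a bounded subregion of the half-plane of diameter tuned to $\max(d_D(a), d_D(b), \|b - a\|)$, and arrange $\varphi'$ so that $\varphi(\Bbb D)$ lies in a neighborhood of $\zeta$ where $|f(y_1, z')| \le \varepsilon |(y_1, z')|$. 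The resulting inequality reduces to an algebraic comparison of $\mathrm{Re}\,\varphi_1$ with a weighted combination of $|\mathrm{Im}\,\varphi_1|$ and $|\varphi'|$, which can be arranged by an appropriate scaling. This balancing replaces the bi-Lipschitz flattening used in Propositions \ref{C1} and \ref{plane}, which is unavailable for the Kobayashi metric since only holomorphic changes of coordinates are admissible.
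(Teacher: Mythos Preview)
The paper does not actually prove this proposition: it is quoted verbatim from \cite[Proposition~5(a)]{NA} and used as input for the last proposition. So there is no ``paper's own proof'' to match; what can be said is that the paper's treatment of the neighbouring Kobayashi estimate (the final proposition of Section~4) proceeds through the \emph{infinitesimal} Kobayashi metric $\kappa_D$, not through global analytic disks joining $a$ to $b$. That strongly suggests the argument in \cite{NA} is of the same type: one shows $2\kappa_D(a;X)\le(1+o(1))\|X\|/d_D(a)$ for $a$ near $\zeta$, then integrates along a quasi-hyperbolic geodesic to obtain $2k_D(a,b)\le(1+o(1))h_D(a,b)$. This needs only one analytic disk through each point $a$ (in fact a disk or half-plane in the complex normal line), which is far easier to place inside $D$ than a disk hitting two prescribed points.

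Your two-endpoint construction has a genuine gap. You require $\varphi_1$ to be a conformal (hence injective) map of $\Bbb D$ onto a region in the right half-plane with $\varphi_1(\alpha)=a_1$, $\varphi_1(\beta)=b_1$, and then pick $\varphi'$ with $\varphi'(\alpha)=a'$, $\varphi'(\beta)=b'$. But if $a_1=b_1$ while $a'\ne b'$, injectivity of $\varphi_1$ forces $\alpha=\beta$, and no $\varphi'$ can satisfy both constraints; the construction simply does not exist in this case. More importantly, the problem is not confined to a null set: whenever $|a_1-b_1|$ is small compared with $|a'-b'|$, the hyperbolic distance $k_{\Bbb D}(\alpha,\beta)$ is small, so $\alpha$ and $\beta$ are forced close together, and any holomorphic $\varphi':\Bbb D\to\Bbb C^{n-1}$ interpolating $a'$ at $\alpha$ and $b'$ at $\beta$ must have $\sup_{\Bbb D}|\varphi'|$ at least of order $|b'-a'|/|\beta-\alpha|$ (Schwarz--Pick), which blows up. The image $\varphi(\Bbb D)$ then cannot be kept inside the small neighbourhood where $|f|\le\varepsilon|\cdot|$, and the containment $\varphi(\Bbb D)\subset D$ fails. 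Your final paragraph acknowledges the containment difficulty but the proposed ``appropriate scaling'' cannot overcome this quantitative obstruction.

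The infinitesimal route avoids all of this: for a single point $a$ with nearest boundary point $a^*$, one only needs a half-plane (or a large disk) in the complex line $a^*+\Bbb C\, n_{a^*}$ lying in $D$ up to an $o(1)$ tilt, which the $\mathcal C^1$ hypothesis provides directly. Integrating the resulting bound on $\kappa_D$ along the $h_D$-geodesic gives the statement with no case distinctions.
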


It turns out that Corollary \ref{cor} also holds for $2k_D$ instead of $h_D.$ This gives
the optimal version of \cite[Proposition 2.5]{FR} in the $\mathcal C^{1,1}$-smooth case.

\begin{proposition} Let $\zeta$ be a $\mathcal C^{1,1}$-smooth boundary point of a domain $D$
in $\Bbb C^n$ or $\zeta$ be a log-Dini smooth boundary point of a domain $D$ in $\Bbb C.$
Then for any constant $c>1$ there exists a neighborhood $U$ of $\zeta$ such that
$$k_D(a,b)\le\log\left(1+\frac{c||a-b||}{\sqrt{d_D(a)d_D(b)}}\right),\quad a,b\in D\cap U.$$
\end{proposition}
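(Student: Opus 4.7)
The plan is to imitate the proof of Corollary \ref{cor} with $h_D$ replaced throughout by $2k_D$, substituting Kobayashi-theoretic analogs of the two key estimates used there.

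The first ingredient is an additive estimate. Both hypotheses ($\mathcal C^{1,1}$-smoothness in $\Bbb C^n$ or log-Dini smoothness in $\Bbb C$) imply Dini-smoothness, so the remark that Proposition \ref{plane} holds for $2k_D$ even in the Dini-smooth case (cited as \cite[Proposition 6]{NTA}) gives
$$\lim_{a,b\to\zeta}\bigl(2k_D(a,b)-s_D(a,b)\bigr)=0;$$
hence for any $c'>1$ there is a neighborhood $U_2$ of $\zeta$ on which $2k_D\le s_D+\log c'$. The second ingredient is a multiplicative estimate. Combining Proposition \ref{C1} ($\lim h_D/s_D=1$) with the Kobayashi analog stated just above the proposition being proved ($\limsup_{a,b\to\zeta} 2k_D(a,b)/h_D(a,b)\le 1$) yields $\limsup 2k_D/s_D\le 1$; so for any $c'>1$ there is a neighborhood $U_1$ on which $2k_D\le c's_D$.

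With these two estimates in hand, the proof finishes exactly as in Corollary \ref{cor}. One may assume $c\in(1,3]$, since the inequality for larger $c$ follows from any smaller value; set $c'=(c+1)/2\in(1,2]$. Writing $t=||a-b||/\sqrt{d_D(a)d_D(b)}$ and applying the elementary inequalities $\sinh^{-1}(t/2)<\log(1+t)$, $(1+t)^{c'}<1+ct$ for $0<t<1$, and $c'(1+t)<1+ct$ for $t>1$, one deduces $2k_D(a,b)\le 2\log(1+ct)$ on $D\cap U_1\cap U_2$; dividing by $2$ gives the stated bound. There is essentially no real obstacle here: all the genuine analytic input is packaged into the cited Kobayashi analogs of Propositions \ref{C1} and \ref{plane}, and the remaining manipulations are the same elementary bookkeeping already used for $h_D$.
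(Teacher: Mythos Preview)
Your argument is fine in the planar case $D\subset\Bbb C$, but there is a real gap when $n\ge 2$. The additive estimate you invoke, $\lim_{a,b\to\zeta}(2k_D(a,b)-s_D(a,b))=0$, is taken from the Kobayashi analogue of Proposition~\ref{plane}; but Proposition~\ref{plane} and its Kobayashi version in \cite{NTA} are \emph{planar} statements, so this citation yields nothing for $\Bbb C^n$ with $n\ge 2$. In fact the two-sided limit you write down is simply false in higher dimensions: for $D=\{\operatorname{Re} z_1>0\}\subset\Bbb C^2$, $\zeta=0$, and $a=(\varepsilon,0)$, $b=(\varepsilon,\varepsilon)$ with $\varepsilon>0$ real, one has $k_D(a,b)=0$ while $s_D(a,b)=2\sinh^{-1}(1/2)$ for every $\varepsilon$. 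Only the one-sided bound $\limsup(2k_D-s_D)\le 0$ can hold, and it is exactly this bound that your proof needs and does not supply. The multiplicative estimate $\limsup 2k_D/s_D\le 1$ that you do establish is not enough by itself: when $t=\|a-b\|/\sqrt{d_D(a)d_D(b)}$ is large, $c's_D\sim 2c'\log t$ exceeds $2\log(1+ct)\sim 2\log t$, so the ``$t>1$'' branch of your final step collapses.

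The paper closes this gap by an infinitesimal comparison with $h_D$ rather than with $s_D$. The $\mathcal C^{1,1}$ hypothesis gives the inner ball condition, so a ball $\Bbb B_n(\tilde a,r)\subset D$ of fixed radius $r$ touches $\partial D$ at the nearest boundary point of $a$; the explicit Kobayashi metric of the ball then yields $2\kappa_D(a;X)\le 1/d_D(a)+1/(2r)$ for unit $X$. Integrating this along the quasi-hyperbolic geodesic from $a$ to $b$ (whose length is $O(\|a-b\|)$ by the uniform-domain property) gives $2k_D(a,b)\le h_D(a,b)+O(\|a-b\|)$, and Corollary~\ref{cor} for $h_D$ finishes the job. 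The missing analytic input in your proposal is precisely this inner-ball bound on $\kappa_D$; once it is in place, either your bookkeeping or the paper's route through Corollary~\ref{cor} works.
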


\noindent{\it Proof.} Having in mind Corollary \ref{cor}, it is enough to show that
$$\limsup_{\substack{a,b\to\zeta\\a\neq b}}\frac{2k_D(a,b)-h_D(a,b)}{||a-b||}<+\infty.$$

Since $k_D$ is the integrated form of $\kappa_D$ and the lengths of the
quasi-hyperbolic geodesics joining points in $D$ near $\zeta$ are bounded up to a multiplicative
constant by the Euclidean distances between the points, the last inequality will be a consequence
of the following one:
$$\limsup_{\substack{a\to\zeta\\||X||=1}}\left(2\kappa_D(a;X)-\frac{1}{d_D(a)}\right)<+\infty.$$

To see this, note that there exists an $r>0$ such that any $a\in D$ near $\zeta$ is contained in
a (unique) ball $\Bbb B_n(\tilde a,r)\subset D$ with $r-||a-\tilde a||=d_D(a)$ (the inner ball condition).
It remains to use that for such an $a$ and $||X||=1$ one has that
$$\kappa_D(a;X)\le \kappa_{\Bbb B_n(\tilde a,r)}(a;X)\le\frac{r}{r^2-||a-\tilde a||^2}
<\frac{1}{2d_D(a)}+\frac{1}{4r}.\quad\qed$$

\end{document}